\documentclass[12pt]{tran-l}

\newtheorem{thm}{Theorem}[subsection]
\newtheorem{cor}[thm]{Corollary}
\newtheorem{lem}[thm]{Lemma}

\theoremstyle{definition}

\theoremstyle{remark}
\newtheorem{rem}[thm]{Remark}

\usepackage[utf8]{inputenc}   
\usepackage[T1]{fontenc}      
\usepackage[english]{babel}   
\usepackage[a4paper, left=3cm, right=2.5cm, top=2.5cm, bottom=2.5cm]{geometry}

\numberwithin{equation}{subsection}

\begin{document}

\title[Matrix Representations of Jacobsthal Numbers]
{Matrix Representations and Arithmetic Properties of Jacobsthal Numbers via Binary 3x3 Matrices}

\author{Wilson Arley Martinez * , Samin Ingrith Ceron}

\address{Martinez, W.A.; Departmento de Matem\'aticas, Universidad del Cauca , Popay\'an , Colombia}
\email{wamartinez@unicauca.edu.co}

\address{Ceron, S.I.; Departmento de Matem\'aticas, Universidad del Cauca , Popay\'an , Colombia}
\email{sicbravo@gmail.com}

\thanks{This work was completed with the support of the Universidad del Cauca.}

\thanks{The author was also supported by the research group “Estructuras Algebraicas, Divulgaci\'on Matem\'atica y Teorías Asociadas. @DiTa”.}

\thanks{* Corresponding Author: Wilson Arley Martinez.}


\subjclass[2020]{11B39, 15A24, 15B36, 15B34, 11A07}

\keywords{Binary matrices; Jacobsthal sequence; Binet formula; recurrence relations; congruences; Sidon sequences; matrix representations.}

\date{April 26, 2026; revised April 26, 2026}

\dedicatory{}

\commby{W.A.M}


\begin{abstract}
We study matrix representations of the Jacobsthal sequence generated by binary $3\times 3$ matrices with determinants $0$, $2$, and $-2$, using linear algebraic methods analogous to Fibonacci-type constructions. Explicit formulas for matrix powers are obtained, yielding identities for Jacobsthal numbers, including convolution formulas, trace relations, determinant expressions, and Cassini-type identities.  We further derive congruence relations and recurrence formulas, and analyze arithmetic properties such as partial sums and the Sidon-type structure of the sequence. Finally, we prove that exactly three conjugacy classes of binary $3\times 3$ matrices generate the Jacobsthal sequence, providing a unified algebraic framework that connects matrix theory with second-order linear recurrences.
\end{abstract}

\maketitle

\section*{Introduction}

The \emph{Jacobsthal sequence} $(J_n)_{n\ge 0}$ is defined by the second-order linear recurrence
\[
J_{n+1} = J_n + 2J_{n-1}, \quad n \ge 1, \qquad J_0 = 0, \; J_1 = 1.
\]
The first terms are
\[
0, 1, 1, 3, 5, 11, 21, 43, 85, \dots
\]
As a member of the family of Fibonacci-type sequences, the Jacobsthal sequence exhibits a rich algebraic structure and has been studied in connection with number theory, combinatorics, coding theory, and discrete dynamical systems.

\quad

Matrix methods provide an efficient framework for analyzing linear recurrences. It is well known (see, e.g., \cite{KokenBozkurt2008, KokenBozkurt2008b}) that the Jacobsthal numbers can be generated through powers of the companion matrix
\[
A =
\begin{pmatrix}
1 & 2 \\
1 & 0
\end{pmatrix}.
\]
For every integer $n \ge 1$, one has
\[
A^n =
\begin{pmatrix}
J_{n+1} & 2J_n \\
J_n & 2J_{n-1}
\end{pmatrix},
\]
which provides a concise linear algebraic representation of the sequence and facilitates the derivation of structural properties.

\quad

The arithmetic properties of Jacobsthal numbers have been extensively studied in the literature; see, for example, \cite{Horadam1996, KokenBozkurt2008b} and subsequent developments in the theory of second-order linear recurrences. Many classical identities satisfied by Jacobsthal numbers arise as special cases of general identities for Lucas sequences (see, e.g., \cite{Adegoke2019}). In particular, the following well-known identities hold:

\begin{equation}\label{eq:jacobsthal-identities}
\begin{aligned}
J_{n+1}J_{n-1} - J_n^2 &= (-1)^n 2^{n-1}, \\
J_{2n} &= J_n \bigl(J_{n+1} + 2J_{n-1}\bigr), \\
\sum_{k=2}^{n} J_k &= \frac{1}{2}\bigl(J_{n+2} - 3\bigr).
\end{aligned}
\end{equation}
These identities are classical and appear explicitly in the literature on Jacobsthal and related sequences. They illustrate structural properties analogous to Cassini-type relations, duplication formulas, and summation identities, highlighting the interplay between algebraic methods and linear recurrence theory.

\quad

Motivated by these developments, we investigate matrix representations of the Jacobsthal sequence arising from binary \(3\times 3\) matrices with determinants \(0\), \(2\), and \(-2\). For each class, we derive explicit formulas for \(U^n\), expressing their entries in terms of Jacobsthal numbers and thereby obtaining a unified framework for the derivation of identities.

\quad

Within this framework, we establish convolution identities, trace and determinant formulas, and inversion relations. In particular, we recover classical results such as the Binet formula and Cassini-type identities, and further obtain refined relations involving parity corrections, mixed recurrences, congruences modulo powers of \(2\), partial sum formulas, and the Sidon-type property of the sequence.

\quad

Finally, we address a structural classification problem. By performing a complete enumeration of binary $3\times 3$ matrices and classifying them up to conjugation, we show that exactly three conjugacy classes generate the Jacobsthal sequence. Representatives of these classes correspond precisely to the matrix families studied in this work. This classification provides a conceptual framework that connects matrix theory, recurrence relations, and computational algebra.

\section{Recurrences Generated by Binary $3 \times 3$ Matrices with Determinant Zero}
Consider the generating matrix
\[
u = \begin{pmatrix}0 & 0 & 1\\ 0 & 0 & 1\\ 1 & 1 & 1\end{pmatrix}.
\]
In this section, we examine the Jacobsthal  \textit{u}-matrix with determinant~0, which provides a matrix representation of Jacobsthal  numbers. This matrix is employed to compute powers \(u^n\) and derive associated identities; to determine the characteristic roots and the Binet formulas for both the Jacobsthal  sequence and its generalized form; to establish identities involving these sequences; and to obtain summation formulas for the Jacobsthal  numbers.

\subsection{The Matrix Representation of Jacobsthal Numbers}
\begin{lem}\label{l:1} 
Let $u$ be the matrix
\[
u = \begin{pmatrix} 
                    0   &  0   &  1 \\
                    0   &  0   &  1 \\
                    1   &  1   &  1\\
\end{pmatrix}.
\]  
Then
\[
u^{n} =  \begin{pmatrix} \vspace{0.2cm}
                   J_{n-1}  & J_{n-1}   & J_{n}\\ \vspace{0.2cm}
                   J_{n-1}  & J_{n-1}   & J_{n} \\
                   J_{n}    & J_{n}     & J_{n+1} \\
\end{pmatrix}  
\quad \text{and} \quad
\operatorname{Tr}(u^n) = 2 J_{n-1} + J_{n+1},
\]
where \( n\in \mathbb{Z}_{>0} \) and \( J_{n} \) denotes the Jacobsthal numbers defined by the recurrence relation  
\[
J_n = J_{n-1} + 2J_{n-2},
\]
with initial conditions \( J_0 = 0 \) and \( J_1 = 1 \).
\end{lem}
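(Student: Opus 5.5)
Induction on $n\ge 1$ is the natural approach, with the recurrence $J_{n+1}=J_n+2J_{n-1}$ doing all the work. For the base case $n=1$ the claimed matrix has entries $J_0=0$ in the upper-left $2\times2$ block, $J_1=1$ in the off-block positions, and $J_2=1$ in the lower-right corner. This is exactly $u$. The trace formula at $n=1$ gives $2J_0+J_2=1=\operatorname{Tr}(u)$, so the base case holds.

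For the inductive step, assume the formula for $u^n$ and compute $u^{n+1}=u^n u$. It is convenient to note that $u$ has the block form: first two columns both equal to $(0,0,1)^T$ and third column $(1,1,1)^T$. Hence for any matrix $M$, the first and second columns of $Mu$ both equal the third column of $M$. The third column of $Mu$ is the sum of the three columns of $M$.

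Applying this to $M=u^n$:
\begin{itemize}
\item The first two columns of $u^{n+1}$ are $(J_n,J_n,J_{n+1})^T$, as required.
\item The third column is
\[
(2J_{n-1}+J_n,\;2J_{n-1}+J_n,\;2J_n+J_{n+1})^T.
\]
By the Jacobsthal recurrence, $J_n+2J_{n-1}=J_{n+1}$ and $J_{n+1}+2J_n=J_{n+2}$, so this column equals $(J_{n+1},J_{n+1},J_{n+2})^T$.
\end{itemize}
This matches the claimed form of $u^{n+1}$ and closes the induction.

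The trace identity then follows by summing the diagonal entries: $J_{n-1}+J_{n-1}+J_{n+1}=2J_{n-1}+J_{n+1}$. One could also note this equals $J_{n+1}+2J_{n-1}$, the Jacobsthal–Lucas number, but that is not needed.

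There is no real obstacle. The only point needing care is the index bookkeeping at $n=1$, which uses the value $J_0=0$; the recurrence must be applied only for indices where it is valid, here $n\ge1$.
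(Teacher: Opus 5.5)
Your proof is correct and follows the same route as the paper: induction on $n$, computing $u^{n+1}=u^n u$ and closing the step with the recurrence $J_{n+1}=J_n+2J_{n-1}$. Your column observation is just a tidy way of organizing that multiplication, and your base case $n=1$ is slightly better than the paper's base case $n=2$, since it covers every $n\in\mathbb{Z}_{>0}$ as the statement requires.
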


\begin{proof}
We use the principle of mathematical induction (PMI). When $n = 2$,
\[
u^{2} =
\begin{bmatrix}  
                   J_{1}  & J_{1}   & J_{2}\\ \vspace{0.2cm}
                   J_{1}  & J_{1}   & J_{2} \\
                   J_{2}  & J_{2}   & J_{3} \\ 
\end{bmatrix}
=
\begin{bmatrix} 
1 & 1 & 1\\ 
1 & 1 & 1\\  
1 & 1 & 3\\ 
\end{bmatrix},
\]
so the result holds.

\quad

Assume that the statement holds for a positive integer $n=k$, that is,
\[
u^{k} =  \begin{pmatrix} \vspace{0.2cm}
                   J_{k-1}  & J_{k-1}   & J_{k}\\ \vspace{0.2cm}
                   J_{k-1}  & J_{k-1}   & J_{k} \\
                   J_{k}    & J_{k}     & J_{k+1} \\
\end{pmatrix}.
\]
Now we show that it also holds for $n=k+1$. We have
\[
u^{k+1} = u^k u 
= \begin{pmatrix} \vspace{0.2cm}
                   J_{k-1}  & J_{k-1}   & J_{k}\\ \vspace{0.2cm}
                   J_{k-1}  & J_{k-1}   & J_{k} \\
                   J_{k}    & J_{k}     & J_{k+1} \\
\end{pmatrix}
\begin{pmatrix} 
                    0   &  0   &  1 \\
                    0   &  0   &  1 \\
                    1   &  1   &  1\\
\end{pmatrix}.
\]
Hence
\[
= 
\begin{pmatrix} \vspace{0.2cm}
       J_{k}    & J_{k}    & J_{k} + 2J_{k-1}  \\ \vspace{0.2cm}
       J_{k}    & J_{k}    & J_{k} + 2J_{k-1}  \\
       J_{k+1}  & J_{k+1}  & J_{k+1} + 2J_{k} \\
\end{pmatrix}.
\]
Using the recurrence relation \(J_{k+1} = J_k + 2J_{k-1}\), we obtain
\[
u^{k+1} =
\begin{pmatrix} \vspace{0.2cm}
       J_{k}    & J_{k}    & J_{k+1}  \\ \vspace{0.2cm}
       J_{k}    & J_{k}    & J_{k+1}  \\
       J_{k+1}  & J_{k+1}  & J_{k+2} \\
\end{pmatrix}.
\]
Therefore, the result holds for $k+1$, completing the proof.

\end{proof}

Analogously, the following lemma can be proved by mathematical induction.

\begin{lem}\label{l:2} 
Let $u$ be the matrix   
\[
u = \begin{pmatrix} 
                    0   &  1   &  0 \\
                    1   &  1   &  1 \\
                    0   &  1   &  0\\
\end{pmatrix}.
\]  
Then
\[
u^{n} =  \begin{pmatrix} \vspace{0.2cm}
 J_{n-1}  &  J_{n}  &  J_{n-1} \\  \vspace{0.2cm}
 J_{n}    &  J_{n+1}  &  J_{n}  \\
 J_{n-1}  &  J_{n}  &  J_{n-1} \\
\end{pmatrix}
\]
where \( n\in \mathbb{Z}_{>0} \) and \( J_{n} \) denotes the Jacobsthal numbers, defined by the recurrence relation  
\[
J_n = J_{n-1} + 2J_{n-2},
\]
with initial conditions \( J_0 = 0 \) and \( J_1 = 1 \).
\end{lem}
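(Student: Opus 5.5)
We argue by induction on $n$, exactly as for Lemma~\ref{l:1}; the only ingredient is the Jacobsthal recurrence $J_{k+1}=J_k+2J_{k-1}$. An alternative that reuses Lemma~\ref{l:1} directly is to note that the present matrix is a permutation conjugate of the matrix there. Let $P$ be the permutation matrix of the transposition $(2\,3)$, and write $u_1$ for the matrix of Lemma~\ref{l:1} and $u$ for the present one. Then $P u_1 P^{-1}$ swaps the second and third rows and columns of $u_1$, which gives
\[
\begin{pmatrix} 0&1&0\\ 1&1&1\\ 0&1&0\end{pmatrix}=u .
\]
Hence $u^n = P u_1^n P^{-1}$, and the stated form follows by swapping the second and third rows and columns of the matrix in Lemma~\ref{l:1}. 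I would present the direct induction, mention the conjugation as a check, and keep the base case at $n=1$ or $n=2$ as in Lemma~\ref{l:1}.

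\textbf{Base case.} For $n=1$ the claimed matrix has entries $J_0=0$, $J_1=1$ and $J_2=1$, which reproduces $u$ itself. One may also check $n=2$ explicitly: the claim is
\[
u^2=\begin{pmatrix}1&1&1\\1&3&1\\1&1&1\end{pmatrix},
\]
which direct multiplication confirms.

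\textbf{Inductive step.} Assume the formula holds for $u^k$, and compute $u^{k+1}=u^k u$ column by column. Multiplying on the right by $u$ does three things:
\begin{itemize}
\item the new first column is the old second column;
\item the new third column is also the old second column;
\item the new middle column is the sum of the three old columns.
\end{itemize}
The first and third columns of $u^{k+1}$ are therefore $(J_k,J_{k+1},J_k)^T$, as required.

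The middle column has entries $2J_{k-1}+J_k$ in the first and third rows, and $2J_k+J_{k+1}$ in the middle row. By the recurrence these equal $J_{k+1}$ and $J_{k+2}$ respectively. This matches the claimed form for $n=k+1$.

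There is no real obstacle here. The only point needing care is bookkeeping of which column sum invokes the recurrence. If a trace statement is desired it follows immediately as $\operatorname{Tr}(u^n)=2J_{n-1}+J_{n+1}$, agreeing with Lemma~\ref{l:1}, as it must since conjugation preserves the trace.
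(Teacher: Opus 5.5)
Your proposal is correct and matches the paper, which only remarks that this lemma follows by induction in the same way as Lemma~\ref{l:1}. Your column-by-column inductive step is exactly that argument: the new outer columns equal the old middle column, and the new middle column is the sum of the old columns, reduced via $J_{k+1}=J_k+2J_{k-1}$. Your conjugation check by the permutation matrix of $(2\,3)$ is also valid, and it is essentially the similarity the paper records in the corollary that follows.
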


\begin{cor}

The following matrices are similar:
\[
\begin{pmatrix} 
                    0   &  0   &  1 \\
                    0   &  0   &  1 \\
                    1   &  1   &  1\\
\end{pmatrix}, 
\begin{pmatrix} 
                    0   &  1   &  0 \\
                    1   &  1   &  1 \\
                    0   &  1   &  0\\
\end{pmatrix}.
\]
Consequently, for every \( n \in \mathbb{N} \),
\[
\operatorname{Tr}(u^n) = J_{n} + 4J_{n-1}.
\]

\end{cor}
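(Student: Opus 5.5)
The plan is to prove the similarity by exhibiting an explicit permutation matrix, and then to read off the trace formula from Lemma~\ref{l:1} (or Lemma~\ref{l:2}) using similarity invariance of the trace together with the Jacobsthal recurrence.

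\emph{Step 1: the similarity.} Write $u_1$ for the matrix of Lemma~\ref{l:1} and $u_2$ for the matrix of Lemma~\ref{l:2}. In $u_1$ the distinguished index (the one whose row and column are all ones) is the third; in $u_2$ it is the second. This suggests taking the permutation matrix of the transposition $(2\,3)$:
\[
P=\begin{pmatrix}1&0&0\\0&0&1\\0&1&0\end{pmatrix}, \qquad P=P^{-1}=P^{T}.
\]
Conjugating by $P$ permutes rows and columns simultaneously, so $(Pu_1P)_{ij}=(u_1)_{\sigma(i)\sigma(j)}$ with $\sigma=(2\,3)$. Checking the three rows gives $(0,1,0)$, $(1,1,1)$, $(0,1,0)$, which is exactly $u_2$. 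Hence $u_2=Pu_1P^{-1}$. The same computation applied to the closed forms of Lemma~\ref{l:1} reproduces those of Lemma~\ref{l:2}, since $u_2^n=Pu_1^nP^{-1}$; this serves as a consistency check.

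\emph{Step 2: the trace.} Similar matrices have similar powers, so $\operatorname{Tr}(u_1^n)=\operatorname{Tr}(u_2^n)$. Lemma~\ref{l:1} gives this common value as $2J_{n-1}+J_{n+1}$. Substituting the recurrence $J_{n+1}=J_n+2J_{n-1}$ yields
\[
\operatorname{Tr}(u^n)=2J_{n-1}+J_n+2J_{n-1}=J_n+4J_{n-1}.
\]
The same expression also follows directly from the diagonal of Lemma~\ref{l:2}, namely $J_{n-1}+J_{n+1}+J_{n-1}$.

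\emph{Main obstacle.} There is no genuine obstacle; the only point of care is the range of $n$. The lemmas are stated for $n\in\mathbb{Z}_{>0}$, and the trace formula should be read for $n\ge 1$. For $n=0$ one has $\operatorname{Tr}(u^0)=3$, whereas extending the recurrence backwards gives $J_{-1}=\tfrac12$ and hence $J_0+4J_{-1}=2$. I would therefore state explicitly that $n\in\mathbb{N}$ means $n\ge 1$, in agreement with Lemmas~\ref{l:1} and~\ref{l:2}.
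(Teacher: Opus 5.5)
Your proof is correct and follows the argument the paper leaves implicit (it gives no separate proof): the two matrices are conjugate by the permutation matrix of the transposition $(2\,3)$, and the trace formula is the diagonal sum $2J_{n-1}+J_{n+1}$ from Lemma~\ref{l:1} (or Lemma~\ref{l:2}) rewritten using $J_{n+1}=J_n+2J_{n-1}$. Your caveat that the formula holds only for $n\ge 1$ is a correct precision the paper omits: at $n=0$ the trace is $3$, while $J_0+4J_{-1}=2$.
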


Formulas for $J_{m+n}$ are already known in the literature (see, e.g., \cite{Horadam1996,KokenBozkurt2008}).  The identities presented here are algebraically equivalent to those results,  but are written in a form that arises naturally from our matrix representation. 

\begin{cor}\label{c:24}
For $n\geq 1$ and $m\geq 1$, we have the following identities:
\begin{align}
 J_{m+n-1} &= J_{m}J_{n} + 2J_{m-1}J_{n-1},\\
 J_{m+n}   &= J_{m+1}J_{n} + 2 J_{m}J_{n-1}. \label{i:24}
\end{align}
\end{cor}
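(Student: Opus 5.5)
We use the law of exponents $u^{m+n}=u^m u^n$ for the matrix $u$ of Lemma~\ref{l:1}, whose powers are given there in closed form for every positive exponent. Since $m,n\ge 1$, Lemma~\ref{l:1} applies to $u^m$, $u^n$ and $u^{m+n}$. We compute the product $u^m u^n$ entrywise and compare it with the closed form for $u^{m+n}$.

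\textbf{Step 1 (the $(1,1)$ entry).} The first row of $u^m$ is $(J_{m-1},J_{m-1},J_m)$ and the first column of $u^n$ is $(J_{n-1},J_{n-1},J_n)^{T}$. Hence
\[
(u^m u^n)_{11}=2J_{m-1}J_{n-1}+J_mJ_n .
\]
By Lemma~\ref{l:1}, $(u^{m+n})_{11}=J_{m+n-1}$. Equating the two expressions gives the first identity.

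\textbf{Step 2 (the $(3,1)$ entry).} The third row of $u^m$ is $(J_m,J_m,J_{m+1})$. Pairing it with the first column of $u^n$ gives
\[
(u^m u^n)_{31}=2J_mJ_{n-1}+J_{m+1}J_n ,
\]
while Lemma~\ref{l:1} gives $(u^{m+n})_{31}=J_{m+n}$. This is identity \eqref{i:24}.

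As a consistency check, the $(3,3)$ entry yields $J_{m+n+1}=2J_mJ_n+J_{m+1}J_{n+1}$, which is the first identity with $m,n$ replaced by $m+1,n+1$.

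\textbf{Boundary cases.} The only point needing care is the edge case $m=1$ or $n=1$, where $J_{m-1}=J_0$ appears. Lemma~\ref{l:1} is stated for $n\in\mathbb{Z}_{>0}$, and its induction starts at $n=2$, so $n=1$ should be checked directly. For $n=1$ the formula reads
\[
u^1=\begin{pmatrix}J_0&J_0&J_1\\ J_0&J_0&J_1\\ J_1&J_1&J_2\end{pmatrix}=\begin{pmatrix}0&0&1\\0&0&1\\1&1&1\end{pmatrix},
\]
which is indeed $u$. So Lemma~\ref{l:1} holds for all $n\ge1$ and the argument covers all $m,n\ge 1$.

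Alternatively, one can fix $m$ and prove \eqref{i:24} by induction on $n$ using the recurrence $J_{n+1}=J_n+2J_{n-1}$, then derive the first identity by shifting $m\mapsto m-1$ when $m\ge 2$ and checking $m=1$ directly. The matrix route above is shorter and is the one we would write.
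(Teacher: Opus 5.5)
Your proof is correct and matches the paper's argument: you compare entries of $u^m u^n$ with the closed form of $u^{m+n}$ from Lemma~\ref{l:1}, reading off the first identity from the $(1,1)$ entry and \eqref{i:24} from the $(3,1)$ entry. You also check explicitly that Lemma~\ref{l:1} holds at exponent $1$; this is a worthwhile detail, since the paper's induction starts at $n=2$ and leaves that case implicit.
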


\begin{proof}

For \( m, n \geq 1 \), we know that \( u^{m+n} = u^m u^n \). Since we have defined \( u^n \) as follows, the corresponding expressions in matrix form are:

\[
u^m =
\begin{pmatrix} \vspace{0.2cm}
J_{m-1} & J_{m-1} & J_{m}\\ \vspace{0.2cm}
J_{m-1} & J_{m-1} & J_{m}\\
J_{m}   & J_{m}   & J_{m+1}
\end{pmatrix},  \quad
u^{n} =
\begin{pmatrix} \vspace{0.2cm}
J_{n-1} & J_{n-1} & J_{n}\\ \vspace{0.2cm}
J_{n-1} & J_{n-1} & J_{n}\\
J_{n}   & J_{n}   & J_{n+1}
\end{pmatrix}.
\]
The product \( u^m u^n \) is represented by the following \( 3 \times 3 \) matrix:

\[
u^m u^n =
\begin{bmatrix}
A & A & C \\
A & A & C \\
C & C & B
\end{bmatrix}
\]
where

\[
\begin{aligned}
A &= 2 J_{m-1}J_{n-1} + J_{m}J_{n},\\
C &= 2 J_{m}J_{n-1} + J_{m+1}J_{n},\\
B &= 2 J_{m}J_{n} + J_{m+1}J_{n+1}.
\end{aligned}
\]
On the other hand, we have

\[
u^{m+n} =
\begin{pmatrix} \vspace{0.2cm}
J_{m+n-1} & J_{m+n-1} & J_{m+n}\\ \vspace{0.2cm}
J_{m+n-1} & J_{m+n-1} & J_{m+n}\\
J_{m+n}   & J_{m+n}   & J_{m+n+1}
\end{pmatrix}.
\]
Equating the two matrices obtained from the matrix multiplication yields the identities stated in the corollary.

\end{proof}

\subsection{Diagonalization of the Generating Matrix and Binet’s Formula}

The Binet-type formula~\eqref{i:1} for the Jacobsthal numbers and the corresponding formulas for generalized Jacobsthal sequences follow from the general theory of Lucas sequences; see, for example, \cite{Koshy2001}. 
For completeness, we present the derivation below. 
Identity~\eqref{i:1} also appears in \cite{Horadam1996}. 
For recent treatments of generalized Jacobsthal sequences and related Binet formulas, see \cite{YilmazBozkurt2010,Brod2022}.

\begin{thm}
Let $n$ be an integer. The well-known Binet-like formula for the Jacobsthal numbers is
\begin{align}
\label{i:1} 
J_n = \dfrac{2^{n}-(-1)^{n}}{3}.
\end{align}
\end{thm}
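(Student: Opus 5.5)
The plan follows the section title and diagonalizes the generating matrix $u$ of Lemma~\ref{l:1}. First I will compute the characteristic polynomial of
\[
u=\begin{pmatrix}0&0&1\\0&0&1\\1&1&1\end{pmatrix},
\]
which should be $\lambda^3-\lambda^2-2\lambda=\lambda(\lambda-2)(\lambda+1)$. The eigenvalues are therefore $0,2,-1$. They are distinct, so $u$ is diagonalizable over $\mathbb{Q}$.

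Next I will write down eigenvectors explicitly:
\begin{itemize}
\item for $\lambda=0$, $(1,-1,0)^T$;
\item for $\lambda=2$, $(1,1,2)^T$;
\item for $\lambda=-1$, $(1,1,-1)^T$.
\end{itemize}
These form the columns of an invertible matrix $P$ with $u=PDP^{-1}$ and $D=\mathrm{diag}(0,2,-1)$. Then $u^n=PD^nP^{-1}$ for $n\ge1$. Computing $P^{-1}$, or just the relevant row and column, and extracting the $(1,3)$ entry gives $\tfrac{1}{3}\bigl(2^n-(-1)^n\bigr)$. By Lemma~\ref{l:1} this entry equals $J_n$, which proves \eqref{i:1} for all $n\ge1$.

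A shortcut avoids inverting $P$ in full. Since $u$ has distinct eigenvalues, every entry of $u^n$ for $n\ge1$ has the form $\alpha 2^n+\beta(-1)^n$; the $0^n$ term vanishes. Fitting $\alpha,\beta$ to the values $J_1=1$ and $J_2=1$ gives $\alpha=1/3$ and $\beta=-1/3$.

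Finally I will extend the formula to all integers $n$, since the statement says ``integer''. For $n=0$ the formula gives $0=J_0$ directly. For negative $n$, I extend $J_n$ backward by the recurrence $J_{n-2}=(J_n-J_{n-1})/2$. The right-hand side $(2^n-(-1)^n)/3$ satisfies the same recurrence, because $2$ and $-1$ are roots of $x^2=x+2$. Two consecutive terms agree, so the two sides agree for all $n\in\mathbb{Z}$ by induction in both directions.

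I expect the main obstacle to be bookkeeping rather than anything conceptual: computing $P^{-1}$ correctly and checking the $(1,3)$ entry. The next most delicate point is making clear that Lemma~\ref{l:1} only covers $n\ge1$, so the extension to $n\le0$ must go through the recurrence argument above.
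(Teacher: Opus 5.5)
Your proposal is correct and follows the paper's proof. Both diagonalize $u$ (your eigenvectors are rescalings and a reordering of the paper's $v_1,v_2,v_3$), write $u^n=PD^nP^{-1}$, and read off the $(1,3)$ entry against Lemma~\ref{l:1}; the third column of your $P^{-1}$ is $(0,1/3,-1/3)^T$, which gives $\frac{1}{3}\bigl(2^n-(-1)^n\bigr)$ as you claim. Your separate treatment of $n\le 0$ via $J_0=0$ and the backward recurrence is more careful than the paper, which asserts $u^n=PD^nP^{-1}$ for every integer $n$ even though $u$ is singular.
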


\begin{proof}
Let the matrix $u$ be as in Lemma~\ref{l:1}. The eigenvalues and eigenvectors of the matrix $u$ are
\[
\lambda_1=0, \quad \lambda_2=-1, \quad \lambda_3=2
\]
and
\[
v_1=
\begin{pmatrix}
-1 \\
\phantom{-}1 \\
\phantom{-}0
\end{pmatrix}, \quad
v_2=
\begin{pmatrix}
-1\\
-1 \\
\phantom{-}1
\end{pmatrix}, \quad
v_3=
\begin{pmatrix}
 \dfrac{1}{2} \\
 \dfrac{1}{2}  \\
 1
\end{pmatrix}
\]
respectively. Then the matrix $u$ can be diagonalized as
\[
D = P^{-1} \, u \, P
\]
where
\[
P = (v_1, v_2, v_3) =
\begin{pmatrix}\vspace{0.2cm}
-1 & -1 & \dfrac{1}{2} \\ \vspace{0.2cm}
 \phantom{-}1 & -1 & \dfrac{1}{2} \\
 \phantom{-}0 &  \phantom{-}1 & 1
\end{pmatrix},
\qquad
P^{-1} =
\begin{pmatrix} \vspace{0.2cm}
-\dfrac{1}{2} & \phantom{-}\dfrac{1}{2}   & 0 \\ \vspace{0.2cm}
-\dfrac{1}{3} & -\dfrac{1}{3}  & \dfrac{1}{3} \\ \vspace{0.2cm}
\phantom{-}\dfrac{1}{3}  & \phantom{-}\dfrac{1}{3}   & \dfrac{2}{3}
\end{pmatrix}
\]
and
\[
D = \text{diag}(\lambda_1,\lambda_2,\lambda_3) =
\begin{pmatrix}
0  &  \phantom{-}0 & 0 \\
0  & -1 & 0 \\
0  &  \phantom{-}0 & 2
\end{pmatrix}.
\]
From the properties of similar matrices, we can write 
\begin{equation}\label{i:35}
u^n = P D^n P^{-1},
\end{equation}
where $n$ is any integer and
\[
D^n =
\begin{pmatrix}
0 & 0 & 0 \\
0 & (-1)^{n} & 0 \\
0 & 0        & 2^{n}
\end{pmatrix}.
\]
By equation~\ref{i:35}, we obtain
\[
u^{n}=
\begin{pmatrix}
J_{n-1} & J_{n-1} & J_{n} \\
J_{n-1} & J_{n-1} & J_{n} \\
J_{n}   & J_{n}   & J_{n+1}
\end{pmatrix}
=
\begin{bmatrix} \vspace{0.2cm}
\dfrac{2^{n-1}+(-1)^{n}}{3} & \dfrac{2^{n-1}+(-1)^{n}}{3} & \dfrac{2^{n}-(-1)^{n}}{3} \\ \vspace{0.2cm}
\dfrac{2^{n-1}+(-1)^{n}}{3} & \dfrac{2^{n-1}+(-1)^{n}}{3} & \dfrac{2^{n}-(-1)^{n}}{3} \\ \vspace{0.2cm}
\dfrac{2^{n}-(-1)^{n}}{3}   & \dfrac{2^{n}-(-1)^{n}}{3}   & \dfrac{2^{n+1}+(-1)^{n}}{3}
\end{bmatrix}.
\]
Thus, the proof is completed.

\end{proof}

\section{Recurrences Generated by Binary $3 \times 3$ Matrices with Determinant Two}

Consider the generating matrix
\[
u = \begin{pmatrix}
0 & 1 & 1\\ 
1 & 0 & 1\\ 
1 & 1 & 0
\end{pmatrix}.
\]
In this section, we present the Jacobsthal \textit{u}-matrix with determinant~2, which provides a matrix representation of the Jacobsthal numbers. We use it to compute powers \(u^n\), determinants, inverses, and Cassini-like identities.

\subsection{The Matrix Representation of Recurrences and Their Identities}

\begin{lem}\label{l:5} 
Let \(u\) be the symmetric matrix
\[
u =
\begin{pmatrix} 
0 & 1 & 1 \\
1 & 0 & 1 \\
1 & 1 & 0
\end{pmatrix}.
\]
Then,
\[
u^{n} =
\begin{pmatrix}
2J_{n-1} & J_{n} & J_{n}\\
J_{n} & 2J_{n-1} & J_{n}\\
J_{n} & J_{n} & 2J_{n-1}
\end{pmatrix}
\quad \text{and} \quad
\operatorname{Tr}(u^n) = 6 J_{n-1},
\]
where \( n\in \mathbb{Z}_{>0} \) and \( J_{n} \) denotes the Jacobsthal numbers, defined by the recurrence relation  
\[
J_n = J_{n-1} + 2J_{n-2},
\]
with initial conditions \( J_0 = 0 \) and \( J_1 = 1 \).
\end{lem}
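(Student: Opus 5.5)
We argue by induction on $n$, as in Lemma~\ref{l:1}. The base case $n=1$ holds because $2J_0=0$ and $J_1=1$, so the displayed matrix is exactly $u$.

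For the inductive step, assume the formula holds for $u^k$. We compute $u^{k+1}=u^k u$ entrywise. Since $u$ has zeros on the diagonal and ones elsewhere, each entry $(i,j)$ of $u^k u$ is the sum of row $i$ of $u^k$ over the two columns other than $j$.

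\textbf{Diagonal entries.} For $(i,i)$ the two other entries in row $i$ are both $J_k$. The entry is therefore $2J_k$, which equals $2J_{(k+1)-1}$ as required.

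\textbf{Off-diagonal entries.} For $(i,j)$ with $i\ne j$, the two remaining entries in row $i$ are the diagonal one $2J_{k-1}$ and one off-diagonal $J_k$. The entry is therefore $J_k+2J_{k-1}$, which equals $J_{k+1}$ by the Jacobsthal recurrence.

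This reproduces the claimed form of $u^{k+1}$ and closes the induction. The only point needing care is this bookkeeping of which entries are summed. The symmetry of $u$ under simultaneous permutation of rows and columns means a single diagonal entry and a single off-diagonal entry suffice.

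For the trace, summing the three diagonal entries gives $\operatorname{Tr}(u^n)=3\cdot 2J_{n-1}=6J_{n-1}$.

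As a consistency check, we could also diagonalize. We have $u=E-I$ with $E$ the all-ones matrix, so the eigenvalues are $2,-1,-1$. This gives $\operatorname{Tr}(u^n)=2^n+2(-1)^n$. By the Binet formula~\eqref{i:1}, $6J_{n-1}=2(2^{n-1}-(-1)^{n-1})=2^n+2(-1)^n$, so the two computations agree.
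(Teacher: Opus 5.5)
Your proof is correct and follows the paper's argument: induction on $n$, with $u^k u$ giving $2J_k$ on the diagonal and $J_k+2J_{k-1}=J_{k+1}$ off the diagonal, and the trace obtained by summing the diagonal. Your base case $n=1$ is slightly better than the paper's $n=2$, since it covers every $n\in\mathbb{Z}_{>0}$ directly.
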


\begin{proof}

We use the principle of mathematical induction (PMI). When \(n = 2\),
\[
u^{2} =
\begin{bmatrix}  
2J_{1} & J_{2} & J_{2}\\
J_{2} & 2J_{1} & J_{2}\\
J_{2} & J_{2} & 2J_{1}
\end{bmatrix}
=
\begin{bmatrix} 
2 & 1 & 1\\ 
1 & 2 & 1\\  
1 & 1 & 2
\end{bmatrix},
\]
so the result holds.

Assume that it is true for a positive integer \(n = k\):
\[
u^{k} =
\begin{pmatrix}
2J_{k-1} & J_{k} & J_{k}\\
J_{k} & 2J_{k-1} & J_{k}\\
J_{k} & J_{k} & 2J_{k-1}
\end{pmatrix}.
\]
Now we prove that it holds for \(n = k + 1\). Then
\[
u^{k+1} = u^k u
=
\begin{pmatrix}
2J_{k-1} & J_{k} & J_{k}\\
J_{k} & 2J_{k-1} & J_{k}\\
J_{k} & J_{k} & 2J_{k-1}
\end{pmatrix}
\begin{pmatrix}
0 & 1 & 1\\
1 & 0 & 1\\
1 & 1 & 0
\end{pmatrix}.
\]
Thus,

\hspace{4.7cm} \(=
\begin{pmatrix}
2J_{k} & J_{k}+2J_{k-1} & J_{k}+2J_{k-1}\\
J_{k}+2J_{k-1} & 2J_{k} & J_{k}+2J_{k-1}\\
J_{k}+2J_{k-1} & J_{k}+2J_{k-1} & 2J_{k}
\end{pmatrix}.\)   

\quad

By the recurrence relation \(J_{k+1} = J_k + 2J_{k-1}\), the result follows.

\end{proof}

\begin{cor}
Let \( u \) be the Jacobsthal matrix defined by
\[
u =
\begin{pmatrix} 
0 & 1 & 1 \\
1 & 0 & 1 \\
1 & 1 & 0
\end{pmatrix}.
\]
Then, for every \( n \in \mathbb{N} \), we have
\[
\det(u^n) = 2J_{n}^3 + 8J_{n-1}^3 - 6J_{n}^2 J_{n-1} = 2^n.
\]
\end{cor}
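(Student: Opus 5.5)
The claim has two halves, and I will prove them separately. The first half, the expression in Jacobsthal numbers, comes from Lemma~\ref{l:5}. The second half, the value $2^n$, comes from multiplicativity of the determinant.

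\textbf{Step 1: $\det(u^n)=2^n$.} Expanding along the first row,
\[
\det u = 0\cdot(0-1) - 1\cdot(0-1) + 1\cdot(1-0) = 2 .
\]
Since $\det$ is multiplicative, $\det(u^n) = (\det u)^n = 2^n$. This also follows from the spectrum: $u$ is symmetric with eigenvalues $2,-1,-1$, so its determinant is $2\cdot(-1)^2=2$.

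\textbf{Step 2: the expression in $J_n$ and $J_{n-1}$.} By Lemma~\ref{l:5}, $u^n$ has the form $aI+b(E-I)$, where $E$ is the all-ones matrix, $a=2J_{n-1}$ on the diagonal, and $b=J_n$ off the diagonal. Such a matrix has eigenvalue $a+2b$ with multiplicity one and eigenvalue $a-b$ with multiplicity two. I could also just expand the $3\times3$ determinant by Sarrus' rule. Either way,
\[
\det(u^n) = a^3 + 2b^3 - 3ab^2 .
\]
Substituting $a=2J_{n-1}$ and $b=J_n$ gives
\[
8J_{n-1}^3 + 2J_n^3 - 6J_{n-1}J_n^2 .
\]
This should be compared with the displayed middle term. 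The statement as printed has the monomial $J_n^2J_{n-1}$, which equals $J_{n-1}J_n^2$, so it is consistent.

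\textbf{Step 3: consistency check.} Combining Steps 1 and 2 yields the claimed chain of equalities. As a sanity check, the factored form is
\[
(a+2b)(a-b)^2 = (2J_{n-1}+2J_n)(2J_{n-1}-J_n)^2 .
\]
By the Binet formula~\eqref{i:1}, $J_n+J_{n-1}=2^{n-1}$ and $2J_{n-1}-J_n=(-1)^{n-1}$. So the product is $2\cdot 2^{n-1}\cdot 1 = 2^n$. This gives an independent verification that avoids the multiplicativity argument.

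\textbf{Expected obstacle.} The only real risk is bookkeeping: getting the sign and coefficient of the mixed term in the determinant expansion right. I would double-check it against $n=1$. There $a=0$ and $b=1$, so the formula gives $2$, as required.
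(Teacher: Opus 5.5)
Your proof is correct and follows the same route as the paper: $\det u = 2$ together with multiplicativity gives $2^n$, and evaluating the determinant of the matrix in Lemma~\ref{l:5} gives the polynomial in $J_n, J_{n-1}$. The factored check $(a+2b)(a-b)^2$ is a good extra. One small slip there: by Lemma~\ref{l:basic-shift}, $2J_{n-1}-J_n=(-1)^n$ rather than $(-1)^{n-1}$, which is harmless because the factor is squared.
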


\begin{proof}
It is easy to verify that
\[
\det(u) = 2.
\]
Hence, we can write
\begin{align*}
\det(u^n) &= \det(u)\cdot\det(u)\cdots\det(u) \\
          &= 2^n.
\end{align*}
If \( x = J_{n} \) and \( y = J_{n-1} \), then the determinant of the matrix \( u^n \), given in Lemma~\ref{l:5}, is
\begin{align*}
\left|
\begin{array}{ccc}
2y & x & x \\
x & 2y & x \\
x & x & 2y
\end{array}
\right|
&= 2x^3 + 8y^3 - 6x^2y \\
&= 2J_{n}^3 + 8J_{n-1}^3 - 6J_{n}^2 J_{n-1}.
\end{align*}
Thus,
\[
J_{n}^3 + 4J_{n-1}^3 - 3J_{n}^2 J_{n-1} = 2^{n-1}
\]
for all \( n \geq 1 \).

\end{proof}

\subsection{Inverse Powers of the Generating Matrix and Identities for Recurrence Relations}

\begin{lem}\label{l:3b} 
Let \( u \) be the Jacobsthal matrix defined by
\[
u = \begin{pmatrix} 
                    0   &  1   &  1 \\
                    1   &  0   &  1 \\
                    1   &  1   &  0\\
\end{pmatrix}.
\]
Then the inverse of \(u^n\) is given by
\[
u^{-n} = 
\dfrac{(-1)^{n+1}}{2^{n}}
\begin{pmatrix} \vspace{0.2cm}
                   -J_{n+1}  & J_{n}     & J_{n}\\ \vspace{0.2cm}
                   J_{n}    & -J_{n+1}   & J_{n} \\
                   J_{n}    & J_{n}     & -J_{n+1} \\
\end{pmatrix},
\]
where \( J_0 = 0 \), \( J_1 = 1 \), and \( J_n = J_{n-1} + 2J_{n-2} \); this defines the Jacobsthal sequence.
\end{lem}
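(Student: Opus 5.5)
The plan is to verify directly that the proposed matrix is a two-sided inverse of $u^n$ as given in Lemma~\ref{l:5}. Write $V_n$ for the displayed matrix on the right-hand side, without the scalar factor $\frac{(-1)^{n+1}}{2^n}$. Both $u^n$ and $V_n$ lie in the commutative algebra spanned by $I$ and $E$, where $E$ is the all-ones matrix. Indeed,
\[
u^n = (2J_{n-1}-J_n)I + J_n E, \qquad V_n = -(J_{n+1}+J_n)I + J_n E .
\]
This reduces the computation to arithmetic in that algebra using $E^2 = 3E$. In particular $u^n V_n = V_n u^n$ automatically, so one product suffices.

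\textbf{Step 1: expand the product.} Put $a = 2J_{n-1}-J_n$, $b = J_n$, $c = -(J_{n+1}+J_n)$, $d = J_n$. Then
\[
(aI+bE)(cI+dE) = ac\,I + (ad+bc+3bd)E .
\]
The $E$-coefficient is
\[
J_n\bigl(2J_{n-1}-J_n - J_{n+1}-J_n+3J_n\bigr) = J_n\bigl(2J_{n-1}+J_n-J_{n+1}\bigr),
\]
which vanishes by the recurrence $J_{n+1}=J_n+2J_{n-1}$. Hence $u^nV_n = ac\,I$.

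\textbf{Step 2: evaluate the scalar.} The scalar is
\[
ac = -(2J_{n-1}-J_n)(J_{n+1}+J_n).
\]
We need this to equal $(-1)^{n+1}2^n$, i.e. $(J_n-2J_{n-1})(J_n+J_{n+1}) = (-1)^{n+1}2^n$. Two facts from the Binet formula \eqref{i:1} settle it:
\[
J_n - 2J_{n-1} = (-1)^{n+1}, \qquad J_n + J_{n+1} = 2^n .
\]
Each is a one-line substitution. Alternatively, both follow by a trivial induction from the recurrence. Multiplying them gives exactly $(-1)^{n+1}2^n$, so
\[
u^n\cdot\frac{(-1)^{n+1}}{2^n}V_n = I .
\]

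\textbf{Cross-check and main obstacle.} As a consistency check, $\det u^n = 2^n \neq 0$ by the preceding corollary, so the inverse exists and is unique. The main obstacle is the sign and scalar bookkeeping in Step 2, namely identifying the factorization of $ac$ into the two simple closed forms. Once the $I$/$E$ decomposition is used, everything else is mechanical. If one prefers to avoid Binet, an induction on $n$ works instead: use $u^{-(n+1)} = u^{-n}u^{-1}$, with base case
\[
u^{-1} = \tfrac12(-I+E),
\]
which matches the formula for $n=1$ since $J_2=J_1=1$.
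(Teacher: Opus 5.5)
Your proof is correct, but it takes a different route from the paper. The paper argues by induction on $n$. It checks the base case $u^{-1}$ directly, then computes $u^{-n}u^{-1}$ and uses only the recurrence $J_{n+2}=J_{n+1}+2J_n$ to recognize $u^{-(n+1)}$. It never needs the formula for $u^n$ from Lemma~\ref{l:5} or any closed form.

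You instead verify $u^n\cdot\frac{(-1)^{n+1}}{2^n}V_n=I$ directly. Your decomposition into $I$ and the all-ones matrix $E$ is a real gain:
\begin{itemize}
\item it makes two-sidedness automatic;
\item it reduces nine entries to two coefficients;
\item it makes the scalar transparent. Since $c=-(a+3b)$ by the recurrence, $ac=-a(a+3b)$ is, up to sign, the product of the two distinct eigenvalues $a=(-1)^n$ and $a+3b=2^n$ of $u^n$.
\end{itemize}

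The price is that your argument rests on Lemma~\ref{l:5} together with $J_n-2J_{n-1}=(-1)^{n+1}$ and $J_n+J_{n+1}=2^n$. These come from the Binet formula, or from Lemmas~\ref{l:basic-shift} and~\ref{l:10}, which the paper only states later. Note also that the diagonal entry of $u^nV_n$ is $2(J_n^2-J_{n-1}J_{n+1})$. So your Step~2 is in effect a proof of the Cassini identity by closed forms. If the lemma is proved your way, Corollary~\ref{c:23}, which the paper derives from $u^nu^{-n}=I$, would just repackage those closed forms. The paper's induction keeps that derivation independent of them.

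There is one slip in your closing remark: $u^{-1}=\frac{1}{2}(E-2I)$, not $\frac{1}{2}(E-I)$. The latter is $\frac{1}{2}u$, and $u\cdot\frac{1}{2}u=\frac{1}{2}u^2\neq I$. Your own decomposition gives $V_1=-(J_2+J_1)I+J_1E=E-2I$, so the corrected base case does match the formula. This does not affect your main argument.
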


\begin{proof}
We proceed by mathematical induction. When \(n = 1\),
\[
u^{-1} = 
\dfrac{1}{2}
\begin{pmatrix}
-1   &  1 &  1 \\
 1   & -1 &  1 \\
 1   &  1 & -1
\end{pmatrix},
\]
so the result holds.

\quad

Assume that the formula holds for an arbitrary positive integer \(n\):
\[
u^{-n} = 
\dfrac{(-1)^{n+1}}{2^{n}}
\begin{pmatrix} \vspace{0.2cm}
                   -J_{n+1}  & J_{n}     & J_{n}\\ \vspace{0.2cm}
                   J_{n}    & -J_{n+1}   & J_{n} \\
                   J_{n}    & J_{n}     & -J_{n+1} \\
\end{pmatrix}.
\]
We compute the product \(u^{-n}u^{-1}\) using the matrices above:
\[
u^{-n}u^{-1} =
\dfrac{(-1)^{n+1}}{2^{n}}
\begin{pmatrix} \vspace{0.2cm}
                   -J_{n+1}  & J_{n}     & J_{n}\\ \vspace{0.2cm}
                   J_{n}    & -J_{n+1}   & J_{n} \\
                   J_{n}    & J_{n}     & -J_{n+1} \\
\end{pmatrix}
\dfrac{1}{2}
\begin{pmatrix}
-1   &  1 &  1 \\
 1   & -1 &  1 \\
 1   &  1 & -1
\end{pmatrix}.
\]
Carrying out the matrix multiplication, we obtain
\[
u^{-n}u^{-1} =
\dfrac{(-1)^{n+1}}{2^{n+1}}
\begin{pmatrix} \vspace{0.2cm}
       J_{n+1}+2 J_{n}   &  -J_{n+1}          &  -J_{n+1}\\ \vspace{0.2cm}
       -J_{n+1}          &   J_{n+1}+2 J_{n}  &  -J_{n+1} \\
       -J_{n+1}          &  -J_{n+1}          &   J_{n+1}+2 J_{n}  \\
\end{pmatrix}.
\]
Recall that the Jacobsthal sequence satisfies
\[
J_{n+2} = J_{n+1} + 2J_n .
\]
Substituting this relation into the matrix above, we obtain
\[
u^{-n}u^{-1} =
\dfrac{(-1)^{n+1}}{2^{n+1}}
\begin{pmatrix} \vspace{0.2cm}
       J_{n+2}   &  -J_{n+1}          &  -J_{n+1}\\ \vspace{0.2cm}
       -J_{n+1}  &   J_{n+2}          &  -J_{n+1} \\
       -J_{n+1}  &  -J_{n+1}          &   J_{n+2} \\
\end{pmatrix}.
\]
Hence,
\[
u^{-n}u^{-1} = u^{-(n+1)},
\]
which proves the inductive step and completes the proof.

\end{proof}

\quad

The following corollary recovers a Cassini-type identity for the Jacobsthal numbers; see, for example, \cite{BrodMichalski2022,Voll2010}. We give a proof via matrix methods. 

\begin{cor}\label{c:23} 
For all positive integers \( n \geq 1 \), the following equality holds:
\begin{align}
J_n^2 - J_{n-1}J_{n+1} &= (-1)^{n+1}2^{n-1}. \label{id:15} 
\end{align}
\end{cor}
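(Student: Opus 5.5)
The natural route, given the placement right after Lemma~\ref{l:3b}, is to use the identity $u^{n}u^{-n}=I$ with the explicit forms of $u^n$ from Lemma~\ref{l:5} and $u^{-n}$ from Lemma~\ref{l:3b}. Write $x=J_n$, $y=J_{n-1}$, $z=J_{n+1}$. We have
\[
u^{n}=\begin{pmatrix}2y&x&x\\x&2y&x\\x&x&2y\end{pmatrix},\qquad
u^{-n}=\frac{(-1)^{n+1}}{2^{n}}\begin{pmatrix}-z&x&x\\x&-z&x\\x&x&-z\end{pmatrix}.
\]
Multiplying, the $(1,2)$ entry of $u^n u^{-n}$ gives
\[
\frac{(-1)^{n+1}}{2^{n}}\bigl(2xy - xz + x^{2}\bigr)=0 .
\]
The $(1,1)$ entry gives
\[
\frac{(-1)^{n+1}}{2^{n}}\bigl(-2yz+2x^{2}\bigr)=1 .
\]

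The key step is to read off the $(1,1)$ entry. It yields $2\bigl(J_n^2-J_{n-1}J_{n+1}\bigr)=(-1)^{n+1}2^{n}$, since $1/(-1)^{n+1}=(-1)^{n+1}$. Dividing by $2$ gives exactly \eqref{id:15}.

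As a consistency check, the off-diagonal entry reduces to $x\bigl(2y+x-z\bigr)=0$, which is just the recurrence $J_{n+1}=J_n+2J_{n-1}$. So the off-diagonal entries impose no further condition, and the diagonal entry carries all the content.

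The main obstacle is only a formal one. Lemma~\ref{l:5} is stated for $n\geq 1$ and involves $J_{n-1}$, so at $n=1$ we need $J_0=0$. This is fine: $u^1$ has diagonal entries $0=2J_0$. Alternatively, the case $n=1$ can be checked directly: $1-0\cdot1=1=(-1)^2 2^0$.

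If one prefers to avoid the inverse, a second route works equally well. Take determinants of the $2\times2$ companion representation $A^n$ from the Introduction. This gives $2J_{n+1}J_{n-1}-2J_n^2=(-2)^n$, which is the same identity. I would present the inverse-matrix argument as the main proof, to stay within the paper's framework.
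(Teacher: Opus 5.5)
Your proposal is correct and follows the paper's proof essentially verbatim. The paper also multiplies $u^n$ from Lemma~\ref{l:5} by $u^{-n}$ from Lemma~\ref{l:3b}, obtaining the diagonal entry $2(J_n^2-J_{n-1}J_{n+1})$ and the off-diagonal entry $J_n(2J_{n-1}-J_{n+1}+J_n)$, and equates the diagonal entry to $1$. Your backup argument via $\det A^n=(-2)^n$ for the $2\times 2$ companion matrix is also valid.
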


\begin{proof}
Let \( u \) be the Jacobsthal matrix defined by
\[
u =
\begin{pmatrix} 
0 & 1 & 1 \\ 
1 & 0 & 1 \\ 
1 & 1 & 0 
\end{pmatrix}.
\]
Then the inverse of the matrix \( u^n \) is given by
\[
u^{-n} =
\dfrac{(-1)^{n+1}}{2^{n}}
\begin{pmatrix} \vspace{0.2cm}
- J_{n+1} & J_{n} & J_{n}\\ \vspace{0.2cm}
J_{n} & -J_{n+1} & J_{n} \\
J_{n} & J_{n} & -J_{n+1}
\end{pmatrix}.
\]
Here \( J_0 = 0 \), \( J_1 = 1 \), and \( J_n = J_{n-1} + 2J_{n-2} \), which defines the Jacobsthal sequence.

\quad

On the other hand, we have
\[
u^{n} =
\begin{pmatrix} \vspace{0.2cm}
2J_{n-1} & J_{n} & J_{n}\\ \vspace{0.2cm}
J_{n} & 2J_{n-1} & J_{n} \\
J_{n} & J_{n} & 2J_{n-1}
\end{pmatrix}.
\]
The product \( u^n u^{-n} \) yields the following \( 3 \times 3 \) matrix:

\[
u^{n} u^{-n} =
\dfrac{(-1)^{n+1}}{2^{n}}
\begin{bmatrix}
A & B & B \\ 
B & A & B \\ 
B & B & A
\end{bmatrix},
\]
where
\[
\begin{aligned}
A &= 2\big(J_n^2 - J_{n-1}J_{n+1}\big), \\ 
B &= J_n\big(2J_{n-1} - J_{n+1} + J_n\big).
\end{aligned}
\]
On the other hand, we know that
\[
u^n u^{-n} =
\begin{pmatrix} \vspace{0.2cm}
1 & 0 & 0 \\ \vspace{0.2cm}
0 & 1 & 0 \\ 
0 & 0 & 1
\end{pmatrix}.
\]
Therefore, by equating the two matrices, we obtain
\[
\dfrac{(-1)^{n+1}}{2^{n}}A = 1
\quad \text{and} \quad
\dfrac{(-1)^{n+1}}{2^{n}}B = 0.
\]
From the first equality we deduce
\[
A = (-1)^{n+1}2^{n}.
\]
Since \( A = 2(J_n^2 - J_{n-1}J_{n+1}) \), it follows that
\[
J_n^2 - J_{n-1}J_{n+1} = (-1)^{n+1}2^{n-1}.
\]
This completes the proof.
\end{proof}

\section{Recurrences Generated by Binary $3 \times 3$ Matrices with Determinant Minus Two}

Consider the generating matrix
\[
u =
\begin{pmatrix}
0 & 1 & 1 \\
1 & 1 & 0 \\
1 & 0 & 1
\end{pmatrix}.
\]
In this section, we analyze the Jacobsthal \textit{u}-matrix with determinant $-2$, which provides a matrix representation of the Jacobsthal numbers. This matrix is used to compute powers $u^n$, determinants, inverses, and Cassini-type identities.

\subsection{The Matrix Representation of Recurrences and Their Identities}

\begin{lem}[Basic shift identities for Jacobsthal numbers]\label{l:basic-shift}
Let $(J_k)_{k\ge 0}$ be the Jacobsthal sequence. Then, for all integers $k \ge 0$,
\begin{align}
J_{k+1} &= 2J_k + (-1)^k, \label{eq:basic-shift} \\
2J_k &= J_{k+1} + (-1)^{k+1}. \label{eq:dual-shift}
\end{align}
\end{lem}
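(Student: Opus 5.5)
The most direct route is the Binet formula \eqref{i:1}, which the paper establishes for every integer $n$. Substituting $J_k = \frac{2^k-(-1)^k}{3}$ and $J_{k+1} = \frac{2^{k+1}-(-1)^{k+1}}{3} = \frac{2^{k+1}+(-1)^k}{3}$, identity \eqref{eq:basic-shift} reduces to checking
\[
2J_k + (-1)^k = \frac{2^{k+1}-2(-1)^k + 3(-1)^k}{3} = \frac{2^{k+1}+(-1)^k}{3} = J_{k+1}.
\]
This is a one-line verification valid for all $k \ge 0$.

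Identity \eqref{eq:dual-shift} then follows algebraically from \eqref{eq:basic-shift}. Rearranging gives $2J_k = J_{k+1} - (-1)^k = J_{k+1} + (-1)^{k+1}$, using $-(-1)^k = (-1)^{k+1}$. No separate argument is needed.

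As a check independent of the Binet formula, one can prove \eqref{eq:basic-shift} by induction on $k$, using only the recurrence $J_{k+2} = J_{k+1} + 2J_k$.
\begin{itemize}
\item The base case $k=0$ reads $J_1 = 1 = 2\cdot 0 + 1$.
\item For the inductive step, assume $J_{k+1} = 2J_k + (-1)^k$. Then
\[
J_{k+2} = J_{k+1} + 2J_k = J_{k+1} + \bigl(J_{k+1} - (-1)^k\bigr) = 2J_{k+1} + (-1)^{k+1}.
\]
\end{itemize}

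There is no real obstacle here. The only point needing care is the sign bookkeeping with $(-1)^k$ versus $(-1)^{k+1}$ when substituting into the Binet form.
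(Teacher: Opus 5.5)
Your proposal is correct and matches the paper's proof: the paper also substitutes the Binet formula to get $2J_k+(-1)^k=\frac{2^{k+1}+(-1)^k}{3}=J_{k+1}$, and obtains the second identity by rearranging the first. Your additional induction from the recurrence $J_{k+2}=J_{k+1}+2J_k$ is also valid, and it has the small advantage of not depending on the Binet formula at all.
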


\begin{proof}
Using the Binet formula
\[
J_k = \frac{2^k - (-1)^k}{3},
\]
we compute

\begin{align*}
2J_k + (-1)^k
&= \frac{2\left(2^k - (-1)^k\right)}{3} + (-1)^k \\
&= \frac{2^{k+1} - 2(-1)^k + 3(-1)^k}{3} \\
&= \frac{2^{k+1} + (-1)^k}{3} \\
&= J_{k+1}.
\end{align*}
The second identity follows immediately by rearranging \eqref{eq:basic-shift}.

\end{proof}

\begin{cor}[Parity-corrected shift identities]\label{c:parity-shift}
For all integers $k \ge 0$, the following identities hold:
\begin{align}
\frac{1+(-1)^k}{2}+2J_k
&=
\frac{1+(-1)^{k+1}}{2}+J_{k+1}, \label{eq:parity-plus}\\[2mm]
\frac{-1+(-1)^k}{2}+2J_k
&=
\frac{-1+(-1)^{k+1}}{2}+J_{k+1}. \label{eq:parity-minus}
\end{align}
\end{cor}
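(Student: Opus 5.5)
Both identities in Corollary~\ref{c:parity-shift} reduce to Lemma~\ref{l:basic-shift}, so the plan is to rearrange each one into the form of \eqref{eq:basic-shift}. Subtracting the parity terms from both sides, \eqref{eq:parity-plus} is equivalent to
\[
2J_k - J_{k+1} = \frac{1+(-1)^{k+1}}{2} - \frac{1+(-1)^k}{2} = \frac{(-1)^{k+1}-(-1)^k}{2}.
\]
The constants $\pm 1$ cancel in this difference. Likewise, \eqref{eq:parity-minus} is equivalent to
\[
2J_k - J_{k+1} = \frac{-1+(-1)^{k+1}}{2} - \frac{-1+(-1)^k}{2} = \frac{(-1)^{k+1}-(-1)^k}{2}.
\]
So both identities collapse to the single statement $2J_k - J_{k+1} = \tfrac{(-1)^{k+1}-(-1)^k}{2}$.

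Next I simplify the right-hand side. Since $(-1)^{k+1} = -(-1)^k$, we get $\tfrac{(-1)^{k+1}-(-1)^k}{2} = -(-1)^k = (-1)^{k+1}$. The reduced claim is therefore $2J_k - J_{k+1} = (-1)^{k+1}$. This is \eqref{eq:basic-shift} rearranged, or equivalently \eqref{eq:dual-shift}, which holds for all $k\ge 0$. Each step above is an equivalence, so both parity-corrected identities follow.

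As an optional cross-check, one could split on the parity of $k$. For $k$ even, \eqref{eq:parity-plus} reads $1 + 2J_k = J_{k+1}$ and \eqref{eq:parity-minus} reads $2J_k = -1 + J_{k+1}$. For $k$ odd, they read $2J_k = 1 + J_{k+1}$ and $-1 + 2J_k = J_{k+1}$. All four cases are instances of Lemma~\ref{l:basic-shift}. There is no real obstacle here; the only point needing care is the sign bookkeeping when combining the parity terms.
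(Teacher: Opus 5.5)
Your proof is correct and essentially the same as the paper's: both reduce each identity to Lemma~\ref{l:basic-shift} by elementary rearrangement. The paper substitutes $J_{k+1}=2J_k+(-1)^k$ into the right-hand side of each identity separately, while you move everything to one side and observe that both identities collapse to the single statement \eqref{eq:dual-shift}.
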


\begin{proof}
Using the identity $J_{k+1}=2J_k+(-1)^k$ from Lemma~\ref{l:basic-shift}, we compute

\begin{align*}
\frac{1+(-1)^{k+1}}{2}+J_{k+1}
&=
\frac{1-(-1)^k}{2} + 2J_k + (-1)^k \\
&=
\frac{1+(-1)^k}{2} + 2J_k,
\end{align*}
which proves \eqref{eq:parity-plus}.

\;

Similarly,
\begin{align*}
\frac{-1+(-1)^{k+1}}{2}+J_{k+1}
&=
\frac{-1-(-1)^k}{2} + 2J_k + (-1)^k \\
&=
\frac{-1+(-1)^k}{2} + 2J_k,
\end{align*}
which proves \eqref{eq:parity-minus}.

\end{proof}

\begin{rem}
The identities \eqref{eq:basic-shift}, \eqref{eq:parity-plus}, and 
\eqref{eq:parity-minus} show that expressions of the form $2J_k$, 
$(-1)^k+2J_k$, and
\[
\frac{\pm 1+(-1)^k}{2}+2J_k
\]
can be rewritten in terms of $J_{k+1}$ together with suitable parity 
corrections.  This suggests that matrices whose entries involve Jacobsthal numbers may  preserve a stable structural form under iteration. The following lemma shows that this phenomenon occurs for a specific symmetric matrix.
\end{rem}

\begin{lem}\label{l:27} 
Let $u$ be the symmetric matrix
\[
u = \begin{pmatrix} 
0 & 1 & 1 \\
1 & 1 & 0 \\
1 & 0 & 1
\end{pmatrix}.
\]
Then
\[
u^{n} =
\begin{pmatrix} \vspace{0.2cm}
(-1)^n+J_{n} & J_{n} & J_{n} \\ \vspace{0.2cm}
J_{n} & \dfrac{1+(-1)^n}{2}+J_{n} & \dfrac{-1+(-1)^n}{2}+J_{n} \\
J_{n} & \dfrac{-1+(-1)^n}{2}+J_{n} & \dfrac{1+(-1)^n}{2}+J_{n}
\end{pmatrix}
\]
where \(n\in\mathbb{Z}_{>0}\) and \(J_n\) denotes the Jacobsthal numbers defined by
\[
J_n = J_{n-1} + 2J_{n-2},
\]
with initial conditions \(J_0 = 0\) and \(J_1 = 1\).
\end{lem}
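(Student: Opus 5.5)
We argue by induction on $n$, as in Lemmas~\ref{l:1} and~\ref{l:5}. To keep the entries short, write $\epsilon_n=(-1)^n$, $a_n=\frac{1+\epsilon_n}{2}+J_n$ and $b_n=\frac{-1+\epsilon_n}{2}+J_n$. The claimed matrix then has first row $(\epsilon_n+J_n,\,J_n,\,J_n)$ and lower right block with rows $(a_n,b_n)$ and $(b_n,a_n)$.

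For the base case $n=1$ we have $\epsilon_1=-1$ and $J_1=1$. The $(1,1)$ entry is $-1+1=0$, the $(2,2)$ entry is $0+1=1$, and the $(2,3)$ entry is $-1+1=0$. This reproduces $u$ exactly.

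For the inductive step, assume the formula holds for $u^n$ and compute $u^{n+1}=u^n u$. Right multiplication by $u$ sends the columns $(c_1,c_2,c_3)$ of $u^n$ to $(c_2+c_3,\;c_1+c_2,\;c_1+c_3)$. The new entries follow directly from this.

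\begin{itemize}
\item The $(1,1)$ entry becomes $2J_n$. By \eqref{eq:dual-shift}, $2J_n=J_{n+1}+(-1)^{n+1}$, which is the required $(-1)^{n+1}+J_{n+1}$.
\item The $(1,2)$ and $(1,3)$ entries become $\epsilon_n+2J_n$. By \eqref{eq:basic-shift} this equals $J_{n+1}$.
\item The $(2,1)$ entry becomes $a_n+b_n=\epsilon_n+2J_n=J_{n+1}$, again by \eqref{eq:basic-shift}. The $(3,1)$ entry is the same.
\item The $(2,2)$ entry becomes $J_n+a_n=\frac{1+\epsilon_n}{2}+2J_n$. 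By \eqref{eq:parity-plus} this equals $\frac{1+(-1)^{n+1}}{2}+J_{n+1}=a_{n+1}$. The $(3,3)$ entry is treated identically.
\item The $(2,3)$ entry becomes $J_n+b_n=\frac{-1+\epsilon_n}{2}+2J_n$. By \eqref{eq:parity-minus} this equals $b_{n+1}$. The $(3,2)$ entry is the same.
\end{itemize}

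Every entry of $u^{n+1}$ therefore has the claimed form, which completes the induction. The only delicate point is matching each entry with the correct shift identity: the diagonal entries of the lower block need the parity-corrected Corollary~\ref{c:parity-shift}, while the first row and first column need Lemma~\ref{l:basic-shift}. No further obstacle arises.
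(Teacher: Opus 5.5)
Your proof is correct and follows essentially the paper's argument: induction on $n$ via $u^{n+1}=u^n u$, with each entry rewritten using Lemma~\ref{l:basic-shift} and Corollary~\ref{c:parity-shift}. Two small points in your favor: your base case $n=1$ covers $n=1$, which the paper's $n=2$ base case leaves out; and your $(1,2)$ entry $(-1)^n+2J_n$ is correct, whereas the paper's displayed product has the typo $(-1)^k+J_k$ there.
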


\begin{proof}
We prove the result by the principle of mathematical induction (PMI).  For \(n=2\),
\[
u^{2} =
\begin{pmatrix}
1+J_{2} & J_{2} & J_{2} \\ \vspace{0.2cm}
J_{2} & 1+J_{2} & J_{2} \\
J_{2} & J_{2} & 1+J_{2}
\end{pmatrix}
=
\begin{pmatrix}
2 & 1 & 1 \\
1 & 2 & 1 \\
1 & 1 & 2
\end{pmatrix},
\]
since \(J_2=1\). Hence the formula holds for \(n=2\).  Assume that the result holds for \(n=k\), that is,
\[
u^{k} =
\begin{pmatrix} \vspace{0.2cm}
(-1)^k+J_{k} & J_{k} & J_{k} \\ \vspace{0.2cm}
J_{k} & \dfrac{1+(-1)^k}{2}+J_{k} & \dfrac{-1+(-1)^k}{2}+J_{k} \\
J_{k} & \dfrac{-1+(-1)^k}{2}+J_{k} & \dfrac{1+(-1)^k}{2}+J_{k}
\end{pmatrix}.
\]
We now prove the result for \(n=k+1\). We compute

\begin{align*}
u^{k+1} &= u^k u \\[2mm]
&=
\begin{pmatrix} \vspace{0.2cm}
(-1)^k+J_{k} & J_{k} & J_{k} \\ \vspace{0.2cm}
J_{k} & \dfrac{1+(-1)^k}{2}+J_{k} & \dfrac{-1+(-1)^k}{2}+J_{k} \\
J_{k} & \dfrac{-1+(-1)^k}{2}+J_{k} & \dfrac{1+(-1)^k}{2}+J_{k}
\end{pmatrix}
\begin{pmatrix}
0 & 1 & 1 \\
1 & 1 & 0 \\
1 & 0 & 1
\end{pmatrix} \\[2mm]
&=
\begin{pmatrix} \vspace{0.2cm}
2J_k & (-1)^k+J_k & (-1)^k+2J_k \\ \vspace{0.2cm}
(-1)^k+2J_k & \dfrac{1+(-1)^k}{2}+2J_k & \dfrac{-1+(-1)^k}{2}+2J_k \\
(-1)^k+2J_k & \dfrac{-1+(-1)^k}{2}+2J_k & \dfrac{1+(-1)^k}{2}+2J_k
\end{pmatrix}.
\end{align*}

Recall that the Jacobsthal numbers satisfy
\begin{align*}
2J_k &= (-1)^{k+1}+J_{k+1}\\
J_{k+1} &= (-1)^k+2J_k
\quad\text{(Lemma~\ref{l:basic-shift}),}\\
\dfrac{1+(-1)^k}{2}+2J_k &= \dfrac{1+(-1)^{k+1}}{2}+J_{k+1}
\quad\text{(Corollary~\ref{c:parity-shift}),}\\
\dfrac{-1+(-1)^k}{2}+2J_k &= \dfrac{-1+(-1)^{k+1}}{2}+J_{k+1}
\quad\text{(Corollary~\ref{c:parity-shift}).}
\end{align*}

Substituting these identities into the matrix above, we obtain
\[
u^{k+1} =
\begin{pmatrix} \vspace{0.2cm}
(-1)^{k+1}+J_{k+1} & J_{k+1} & J_{k+1} \\ \vspace{0.2cm}
J_{k+1} & \dfrac{1+(-1)^{k+1}}{2}+J_{k+1} & \dfrac{-1+(-1)^{k+1}}{2}+J_{k+1} \\
J_{k+1} & \dfrac{-1+(-1)^{k+1}}{2}+J_{k+1} & \dfrac{1+(-1)^{k+1}}{2}+J_{k+1}
\end{pmatrix}.
\]
Therefore the result holds for \(n=k+1\), completing the proof by mathematical induction.

\end{proof}

\begin{cor}
The following matrices are permutation-similar, and each of them is a Jacobsthal
matrix with determinant $-2$:
\[
\begin{pmatrix}
0 & 1 & 1 \\
1 & 1 & 0 \\
1 & 0 & 1
\end{pmatrix},
\quad
\begin{pmatrix}
1 & 1 & 0 \\
1 & 0 & 1 \\
0 & 1 & 1
\end{pmatrix}.
\]
Consequently, for every \( n \in \mathbb{N} \),
\[
\operatorname{Tr}(u^n) = 3J_n + 2(-1)^n + 1 = 2^n+(-1)^n+1 .
\]
\end{cor}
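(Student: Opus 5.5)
To prove the corollary, I will handle its three claims in turn: that the two matrices are permutation-similar, that each is a Jacobsthal matrix with determinant $-2$, and the trace formula.

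\textbf{Permutation similarity.} Call the two matrices $u$ and $w$. I will exhibit an explicit permutation matrix $P$ with $P^{T} u P = w$. The diagonal of $u$ is $(0,1,1)$ and that of $w$ is $(1,0,1)$, so the relabelling must send index $1$ of $u$ to index $2$ of $w$. Take the cyclic permutation $\sigma=(1\,2\,3)\mapsto(2\,3\,1)$, that is, $w_{ij}=u_{\sigma(i)\sigma(j)}$.

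First check the diagonal:
\[
w_{11}=u_{22}=1,\qquad w_{22}=u_{33}=1,\qquad w_{33}=u_{11}=0 .
\]
This gives $(1,1,0)$, not $(1,0,1)$. So I will instead use $\sigma(1)=2$, $\sigma(2)=1$, $\sigma(3)=3$, that is, the transposition $(1\,2)$. This gives
\[
w_{11}=u_{22}=1,\quad w_{22}=u_{11}=0,\quad w_{33}=u_{33}=1,
\]
\[
w_{12}=u_{21}=1,\quad w_{13}=u_{23}=0,\quad w_{23}=u_{13}=1 .
\]
These entries match $w$, so $w=P u P^{T}$ with $P$ the transposition matrix swapping rows $1$ and $2$.

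\textbf{Determinant and Jacobsthal property.} A direct cofactor expansion gives $\det u=-2$. Since $w$ is similar to $u$, it has the same determinant. Moreover,
\[
w^n = P u^n P^{T},
\]
so the entries of $w^n$ are those of Lemma~\ref{l:27} after swapping indices $1$ and $2$. Hence $w$ also represents the Jacobsthal numbers.

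\textbf{Trace formula.} Trace is invariant under similarity, so it suffices to sum the diagonal of $u^n$ from Lemma~\ref{l:27}:
\[
\operatorname{Tr}(u^n)=(-1)^n+J_n+2\Bigl(\tfrac{1+(-1)^n}{2}+J_n\Bigr)=3J_n+2(-1)^n+1 .
\]
Substituting the Binet formula~\eqref{i:1}, we have $3J_n=2^n-(-1)^n$, and therefore
\[
\operatorname{Tr}(u^n)=2^n+(-1)^n+1 .
\]
As a cross-check, this agrees with the eigenvalues of $u$, which are $2,\,1,\,-1$ (characteristic polynomial $\lambda^3-2\lambda^2-\lambda+2$, with roots summing to $2$ and product $-2$).

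The only step that requires care is choosing the correct permutation; the rest is direct computation. The formula holds for $n=1$ directly, and Lemma~\ref{l:27} covers the cases $n\ge2$.
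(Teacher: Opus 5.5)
Your proof is correct and follows the route the paper leaves implicit, since the paper states this corollary without proof: you read off the diagonal of $u^n$ from Lemma~\ref{l:27} to get $3J_n+2(-1)^n+1$, then use the Binet formula~\eqref{i:1} to rewrite $3J_n=2^n-(-1)^n$. The explicit transposition $(1\,2)$ with $w=PuP^{T}$, the determinant computation, and the eigenvalue cross-check ($2,1,-1$) supply details the paper only asserts; in a final write-up, delete the abandoned cyclic-permutation attempt.
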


\;

The following identity for the Jacobsthal numbers is well known and can be 
derived from the Binet formula. In the present matrix framework, it follows 
naturally from the determinant of powers of the Jacobsthal matrix, yielding 
a short proof that highlights its algebraic structure.

\begin{cor}\label{c:28}
Let \( u \) be the Jacobsthal matrix defined by
\[
u =
\begin{pmatrix} 
0 & 1 & 1 \\
1 & 1 & 0 \\
1 & 0 & 1
\end{pmatrix}.
\]
Then, for every \( n\in\mathbb{N} \),
\[
3J_n+(-1)^n=2^n .
\]
\end{cor}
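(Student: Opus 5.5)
The plan is to compute $\det(u^n)$ in two ways, as the text preceding the statement indicates, and compare. First, $\det(u) = 0\cdot(1\cdot 1-0\cdot 0) - 1\cdot(1\cdot 1 - 0\cdot 1) + 1\cdot(1\cdot 0 - 1\cdot 1) = -2$. Multiplicativity of the determinant then gives $\det(u^n) = (-2)^n$.

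Second, I will evaluate the determinant of the explicit matrix for $u^n$ from Lemma~\ref{l:27}. Write $x = J_n$, $\varepsilon = (-1)^n$, $a = \frac{1+\varepsilon}{2}$ and $b = \frac{-1+\varepsilon}{2}$, so that $a-b = 1$ and $a+b = \varepsilon$. The lower-right $2\times 2$ block is $\begin{pmatrix} a+x & b+x\\ b+x & a+x\end{pmatrix}$. To simplify, I will subtract the third column from the second, which makes the second column $(0,\ a-b,\ b-a)^{T} = (0,1,-1)^{T}$. Then I add the second row to the third, so that the second column has a single nonzero entry. Expanding along that column gives
\[
\det(u^n) = \det\begin{pmatrix} \varepsilon + x & x\\ 2x & a+b+2x\end{pmatrix} = (\varepsilon+x)(\varepsilon+2x) - 2x^2 = \varepsilon^2 + 3\varepsilon x = 1 + 3(-1)^n J_n .
\]

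Equating the two expressions gives $1 + 3(-1)^n J_n = (-2)^n$. Multiplying by $(-1)^n$ yields $(-1)^n + 3J_n = 2^n$, which is the claim for $n\ge 1$. The case $n=0$ is trivial if $\mathbb{N}$ includes $0$. One caveat is that Lemma~\ref{l:27} is stated for $n\in\mathbb{Z}_{>0}$, but its induction begins at $n=2$. So I will check $n=1$ directly: the formula gives $\begin{pmatrix}0&1&1\\1&1&0\\1&0&1\end{pmatrix} = u$, so it holds there as well.

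The only real work is the determinant evaluation, specifically keeping track of the parity terms $a,b$. The column and row operations above are chosen so that the identities $a-b=1$ and $a+b=(-1)^n$ do the simplification automatically. As a sanity check, the identity can also be read off directly from the Binet formula \eqref{i:1}, and I will mention this as a cross-check.
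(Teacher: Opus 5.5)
Your proposal is correct and follows the paper's proof. Both compute $\det(u^n)=(-2)^n$ by multiplicativity and compare it with the determinant of the explicit form of $u^n$ from Lemma~\ref{l:27}, which equals $1+3(-1)^nJ_n$. Your column/row reduction using $a-b=1$ and $a+b=(-1)^n$ is a cleaner way of carrying out the paper's ``direct computation.'' Your direct checks of $n=1$ (and $n=0$) also cover cases that the paper's induction, which starts at $n=2$, leaves implicit.
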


\begin{proof}
It is straightforward to verify that
\[
\det(u)=-2.
\]
Hence,
\[
\det(u^n)=(\det u)^n=(-2)^n.
\]
Let \(x=J_n\), \(a=\dfrac{1+(-1)^n}{2}\) and \(b=\dfrac{-1+(-1)^n}{2}\).
From Lemma~\ref{l:27}, the matrix \(u^n\) has determinant

\[
\left|
\begin{array}{ccc}
(-1)^n+x & x & x \\
x & a+x & b+x \\
x & b+x & a+x
\end{array}
\right|.
\]
A direct computation gives
\begin{align*}
(a^2-b^2)(x+(-1)^n)+(-1)^n(a-b)2x & = (-1)^n(x+(-1)^n)+(-1)^n2x \\
&= (-1)^n(3x+(-1)^n) \\
&= (-1)^n(3J_n+(-1)^n).
\end{align*}
Since \(\det(u^n)=(-2)^n=(-1)^n2^n\), it follows that
\[
3J_n+(-1)^n=2^n.
\]
\end{proof}

\begin{cor}\label{c:31}
For $n\ge 1$ and $m\ge 1$, the following identity holds:
\begin{align}
J_{m+n} = (-1)^mJ_n+(-1)^nJ_m+3J_mJ_n .
\label{i:23}
\end{align}
\end{cor}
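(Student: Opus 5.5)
The plan is to mirror the proof of Corollary~\ref{c:24}, now using the determinant $-2$ matrix of Lemma~\ref{l:27}. For $m,n\ge 1$ we have $u^{m+n}=u^m u^n$. We compare the $(1,2)$ entries of both sides. On the left, Lemma~\ref{l:27} gives $(u^{m+n})_{12}=J_{m+n}$. On the right, the first row of $u^m$ is $\bigl((-1)^m+J_m,\ J_m,\ J_m\bigr)$. The second column of $u^n$ is $\bigl(J_n,\ \tfrac{1+(-1)^n}{2}+J_n,\ \tfrac{-1+(-1)^n}{2}+J_n\bigr)^{T}$. Their dot product is
\[
\bigl((-1)^m+J_m\bigr)J_n + J_m\Bigl(\tfrac{1+(-1)^n}{2}+J_n\Bigr)+J_m\Bigl(\tfrac{-1+(-1)^n}{2}+J_n\Bigr).
\]

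Expanding, the two half-parity terms combine to $J_m\cdot(-1)^n$, since $\tfrac{1+(-1)^n}{2}+\tfrac{-1+(-1)^n}{2}=(-1)^n$. The remaining terms give $(-1)^mJ_n+3J_mJ_n$. The total is therefore exactly $(-1)^mJ_n+(-1)^nJ_m+3J_mJ_n$, which is \eqref{i:23}. No other entries of the product are needed.

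I expect no real obstacle. The only care needed is choosing an entry where the parity corrections cancel cleanly: the $(1,2)$ entry works, whereas the $(1,1)$ entry would give an identity for $(-1)^{m+n}+J_{m+n}$ instead.

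As a sanity check, or as an alternative proof, one can substitute the Binet formula \eqref{i:1} on the right-hand side. We have $3J_mJ_n=\tfrac{(2^m-(-1)^m)(2^n-(-1)^n)}{3}$. Adding $(-1)^mJ_n+(-1)^nJ_m$ cancels the mixed terms and yields $\tfrac{2^{m+n}-(-1)^{m+n}}{3}=J_{m+n}$. The matrix computation remains the primary argument, since it stays within the paper's framework.
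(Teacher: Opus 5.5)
Your proposal is correct and follows the paper's own proof. The paper also expands $u^m u^n$ with Lemma~\ref{l:27} and equates the off-diagonal entry $B=(-1)^mJ_n+(-1)^nJ_m+3J_mJ_n$ with $J_{m+n}$; computing only the $(1,2)$ entry, as you do, is a streamlined version of that comparison, and your Binet-formula check is also valid.
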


\begin{proof}
For $m,n\ge 1$, we have $u^{m+n}=u^m u^n$. Recall that $u^n$ is defined by

\[
u^m =
\begin{pmatrix} \vspace{0.2cm}
(-1)^m+J_m & J_m & J_m \\ \vspace{0.2cm}
J_m & \dfrac{1+(-1)^m}{2}+J_m & \dfrac{-1+(-1)^m}{2}+J_m \\ \vspace{0.2cm}
J_m & \dfrac{-1+(-1)^m}{2}+J_m & \dfrac{1+(-1)^m}{2}+J_m
\end{pmatrix},
\]

\[
u^n =
\begin{pmatrix}\vspace{0.2cm}
(-1)^n+J_n & J_n & J_n \\ \vspace{0.2cm}
J_n & \dfrac{1+(-1)^n}{2}+J_n & \dfrac{-1+(-1)^n}{2}+J_n \\ \vspace{0.2cm}
J_n & \dfrac{-1+(-1)^n}{2}+J_n & \dfrac{1+(-1)^n}{2}+J_n
\end{pmatrix}.
\]
A direct matrix multiplication gives

\[
u^m u^n =
\begin{bmatrix}
A & B & B \\
B & C & D \\
B & D & C
\end{bmatrix},
\]
where

\[
\begin{aligned}
A &= (-1)^{m+n}+B,\\
B &= (-1)^mJ_n+(-1)^nJ_m+3J_mJ_n,\\
C &= \frac{1+(-1)^{m+n}}{2}+B,\\
D &= \frac{-1+(-1)^{m+n}}{2}+B.
\end{aligned}
\]
On the other hand,

\[
u^{m+n}=
\begin{pmatrix}\vspace{0.2cm}
(-1)^{m+n}+J_{m+n} & J_{m+n} & J_{m+n} \\ \vspace{0.2cm}
J_{m+n} & \dfrac{1+(-1)^{m+n}}{2}+J_{m+n} &
\dfrac{-1+(-1)^{m+n}}{2}+J_{m+n}\\ \vspace{0.2cm}
J_{m+n} & \dfrac{-1+(-1)^{m+n}}{2}+J_{m+n} &
\dfrac{1+(-1)^{m+n}}{2}+J_{m+n}
\end{pmatrix}.
\]
Equating the two matrices yields

\[
J_{m+n}=(-1)^mJ_n+(-1)^nJ_m+3J_mJ_n,
\]
which proves the result.

\end{proof}

\subsection{Inverse Powers of the Generating Matrix and Identities for Recurrence Relations}

\begin{lem} \label{l:27b} 
Let \( u \) be the Jacobsthal matrix defined by
\[
u = \begin{pmatrix} 
                    0   &  1   &  1 \\
                    1   &  1   &  0 \\
                    1   &  0   &  1\\
\end{pmatrix}.
\] Then,  the inverse of the matrix \( u^n \) is:

\[
u^{-n} = 
\dfrac{1}{2^n}P^{n} M_{n}.
\]
where

\[
M_{n} =(-1)^{n+1}\begin{pmatrix} \vspace{0.2cm}
-J_{n+1}  & J_{n}     & J_{n} \\ \vspace{0.2cm}
J_{n}    & -J_{n+1}   & J_{n} \\
J_{n}    & J_{n}     &   -J_{n+1} \\
\end{pmatrix}
\text{ and }
P = 
\begin{pmatrix} \vspace{0.2cm}
1   & 0     & 0 \\ \vspace{0.2cm}
0   & 0     & 1 \\
0   & 1     & 0 \\
\end{pmatrix}.
\]
Where $J_n = J_{n-1} + 2J_{n-2}$, with $J_0 = 0$ and $J_1 = 1$, defines the Jacobsthal sequence.
\end{lem}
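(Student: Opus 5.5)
The plan is to reduce the statement to Lemma~\ref{l:3b} instead of running a fresh induction. Let
\[
w=\begin{pmatrix} 0&1&1\\ 1&0&1\\ 1&1&0\end{pmatrix}
\]
be the determinant-two matrix of the previous section, and let $P$ be the permutation matrix in the statement.

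\textbf{Step 1: factor $u$.} Left multiplication by $P$ interchanges the second and third rows. Applied to $w$, this turns the rows $(1,0,1)$ and $(1,1,0)$ into $(1,1,0)$ and $(1,0,1)$, which are exactly the rows of $u$. Hence $u=Pw$.

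\textbf{Step 2: $P$ commutes with $w$.} The matrix $w=J-I$, with $J$ the all-ones matrix, is invariant under simultaneous permutation of rows and columns. Since $P^{-1}=P^{T}=P$, this gives $PwP=w$, that is, $Pw=wP$.

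\textbf{Step 3: powers and inverses.} Because $P$ and $w$ commute, $u^n=(Pw)^n=P^n w^n$ for every $n\ge 1$. Both factors are invertible, and $P^{-n}=P^n$ because $P^2=I$. Therefore
\[
u^{-n}=w^{-n}P^{-n}=w^{-n}P^{n}=P^{n}w^{-n},
\]
where the last equality again uses commutativity.

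\textbf{Step 4: conclude.} By Lemma~\ref{l:3b}, $w^{-n}=\dfrac{1}{2^n}M_n$, where $M_n$ is exactly the matrix in the statement, with the sign $(-1)^{n+1}$ absorbed into it. Substituting gives $u^{-n}=\dfrac{1}{2^n}P^nM_n$, as claimed. As a consistency check, $\det(u^{-n})=(-2)^{-n}$, which matches $\det P^n=(-1)^n$ combined with $\det w^{-n}=2^{-n}$. Note also that $M_n$ commutes with $P$, because it has the same diagonal/off-diagonal pattern as $w$; so the order of the factors in the final formula does not matter.

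\textbf{Main obstacle.} There is no real computational difficulty. The only point needing care is the commutation $Pw=wP$, which is what justifies $(Pw)^n=P^nw^n$; without it the factorization would not pass to powers. If one prefers a self-contained argument, a fallback is induction on $n$ mirroring Lemma~\ref{l:3b}: verify the case $n=1$ directly, $u^{-1}=\tfrac12 PM_1$, and then compute $u^{-(n+1)}=u^{-n}u^{-1}=\tfrac{1}{2^{n+1}}P^{n}M_nPM_1=\tfrac{1}{2^{n+1}}P^{n+1}M_nM_1$. The identity $M_nM_1=M_{n+1}$ is precisely the inductive step already carried out in the proof of Lemma~\ref{l:3b}.
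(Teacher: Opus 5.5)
Your proof is correct, and your main argument takes a genuinely different route from the paper's. The paper runs a fresh induction. It checks $u^{-1}=\tfrac12 PM_1$, writes $u^{-(n+1)}=u^{-n}u^{-1}=\tfrac{1}{2^{n+1}}P^{n}M_nPM_1$, moves $P$ to the left past $M_n$ without comment, and recognizes the remaining product $M_nM_1=M_{n+1}$ as the computation already done in Lemma~\ref{l:3b}. That is exactly your fallback, except that you justify the commutation $M_nP=PM_n$ explicitly, whereas the paper uses it silently.

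Your primary argument replaces the induction with a structural observation. You note that $u=Pw$, where $w$ is the determinant-two matrix, and that $Pw=wP$ because $w$ is the all-ones matrix minus the identity and $P$ is an involutive permutation matrix. Hence $u^{-n}=P^nw^{-n}$, and the formula follows from Lemma~\ref{l:3b} with no new matrix computation. This route explains where the factor $P^n$ comes from, which the paper's induction verifies but never accounts for.

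It also gives more than this lemma. The same identity $u^n=P^nw^n$ recovers Lemma~\ref{l:27} from Lemma~\ref{l:5}: use $2J_{n-1}=J_n+(-1)^n$ and note that $P^n$ is $I$ or $P$ according to parity. It also gives $\det u^n=(-1)^n2^n$ at once. So the whole determinant $-2$ section is essentially a corollary of the determinant $2$ section, a relation the paper's separate inductions do not reveal.
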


\begin{proof}
By the principle of mathematical induction, When $n = 1$, 
\[
u^{-1} = 
\dfrac{1}{2}\begin{pmatrix}
-1 & 1  & 1 \\ \vspace{0.2cm}
1  & 1  & -1 \\
1 & -1 & 1 \\
\end{pmatrix}
\]
so the result is true. Now, assume it is true for an arbitrary positive integer $n$: 
\[
u^{-n} = \dfrac{1}{2^n}P^{n} M_{n}.
\]
Then we compute the product \( u^{-n}u^{-1} \) using the given matrices:

\[
u^{-n}u^{-1}  = 
\dfrac{1}{2^n} P^n (-1)^{n+1}\begin{pmatrix} \vspace{0.2cm}
-J_{n+1}  & J_{n}      &  J_{n} \\ \vspace{0.2cm}
J_{n}    & -J_{n+1}    &  J_{n} \\
J_{n}    & J_{n}      &   -J_{n+1} \\
\end{pmatrix}
\dfrac{1}{2}\begin{pmatrix}
-1 & 1  & 1 \\ \vspace{0.2cm}
1  & 1  & -1 \\
1 & -1 & 1 \\
\end{pmatrix}
\]
Carrying out the matrix multiplication, we obtain:
{\small
\[
u^{-n}u^{-1}  = 
\dfrac{1}{2^{n+1}} P^{n+1} (-1)^{n+1}\begin{pmatrix} \vspace{0.2cm}
-J_{n+1}  & J_{n}      &  J_{n} \\ \vspace{0.2cm}
J_{n}    & -J_{n+1}    &  J_{n} \\
J_{n}    & J_{n}      &   -J_{n+1} \\
\end{pmatrix}
\begin{pmatrix}
-1 &  1  & 1 \\ \vspace{0.2cm}
1  & -1  & 1 \\
1  &  1  & -1 \\
\end{pmatrix}
\]}
Recall that the sequence \( J_n \) satisfies the following relation 
 \[J_n = J_{n-1}+2\,J_{n-2}\] 
Substituting into the matrix, we get:
\[
u^{-n}u^{-1}  = 
\dfrac{1}{2^{n+1}} P^{n+1} (-1)^{n+2}\begin{pmatrix} \vspace{0.2cm}
-J_{n+2}  & J_{n+1}      &  J_{n+1} \\ \vspace{0.2cm}
J_{n+1}    & -J_{n+2}    &  J_{n+1} \\
J_{n+1}    & J_{n+1}      &   -J_{n+2} \\
\end{pmatrix}
\]
Hence,
\[
u^{-n}u^{-1} = u^{-(n+1)}
\]
This completes the proof.

\end{proof}

\begin{lem}\label{l:10}
Let $J_n$ denote the Jacobsthal sequence defined by
\[
J_0=0,\qquad J_1=1,\qquad 
J_n=J_{n-1}+2J_{n-2},\quad n\ge 2.
\]
Then the following identity holds for all $n\ge 0$:
\[
J_{n+1}+J_n = 2^n.
\]
\end{lem}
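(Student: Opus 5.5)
The plan is to derive the identity directly from the Binet formula \eqref{i:1}, with an induction argument as a cross-check. By \eqref{i:1}, for every $n\ge 0$,
\[
J_{n+1}+J_n=\frac{2^{n+1}-(-1)^{n+1}}{3}+\frac{2^{n}-(-1)^{n}}{3}.
\]
The signs $-(-1)^{n+1}=(-1)^n$ and $-(-1)^n$ cancel, so the sum equals $\frac{2^{n+1}+2^n}{3}=\frac{3\cdot 2^n}{3}=2^n$.

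Alternatively, one can combine two earlier results. Lemma~\ref{l:basic-shift} gives $J_{n+1}=2J_n+(-1)^n$, so $J_{n+1}+J_n=3J_n+(-1)^n$. Corollary~\ref{c:28} states that $3J_n+(-1)^n=2^n$ for $n\in\mathbb{N}$. The case $n=0$ is checked directly: $J_1+J_0=1=2^0$.

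For a self-contained proof that uses only the recurrence, I would argue by induction on $n$. The base cases are $n=0$ (giving $1=1$) and $n=1$ (giving $J_2+J_1=2$). For the inductive step, assume the identity holds for $n$. Then
\[
J_{n+2}+J_{n+1}=(J_{n+1}+2J_n)+J_{n+1}=2(J_{n+1}+J_n)=2\cdot 2^n=2^{n+1}.
\]
This step uses only the previous case, so the base case $n=0$ alone suffices.

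There is no real obstacle here. The only point needing care is the parity sign bookkeeping in the Binet computation, and making sure the case $n=0$ is covered, since some of the earlier statements are made only for $n\ge1$.
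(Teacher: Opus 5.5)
Your proposal is correct. Your third argument is exactly the paper's proof: induction from the base case $n=0$, with the step $J_{n+2}+J_{n+1}=(J_{n+1}+2J_n)+J_{n+1}=2(J_{n+1}+J_n)=2^{n+1}$.

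Your lead argument via the Binet formula \eqref{i:1} is a genuinely different and equally valid route. The computation $\frac{2^{n+1}+2^n}{3}=2^n$ is immediate, but it makes the lemma depend on \eqref{i:1}. The paper's induction uses only the recurrence and the initial values, so it does not rely on the diagonalization argument at all.

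Your second route, through Lemma~\ref{l:basic-shift} and Corollary~\ref{c:28}, is also sound. It is not circular, since both results come before this lemma in the paper, and you handle $n=0$ separately. It is the heaviest of the three routes because it passes through a $3\times 3$ determinant. Its side benefit is that it shows, via the shift identity $J_{n+1}=2J_n+(-1)^n$, that this lemma and Corollary~\ref{c:28} are equivalent statements.
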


\begin{proof}
The proof is carried out by mathematical induction. For $n=0$, a direct computation yields
\[
J_1+J_0=1+0=1=2^0,
\]
so the statement holds for the initial case. Assume now that the identity is valid for some fixed $n\ge 0$, namely,
\[
J_{n+1}+J_n=2^n.
\]
Using the recurrence relation of the Jacobsthal sequence, we compute
\[
J_{n+2}+J_{n+1}
=(J_{n+1}+2J_n)+J_{n+1}
=2(J_{n+1}+J_n)
=2\cdot 2^n
=2^{n+1},
\]
which proves that the statement holds for $n+1$. Hence, by induction, the result follows for all $n\ge 0$.

\end{proof}

Comparing the entries of the matrix identity
\[
u^n u^{-n}=I,
\]
and using Lemma~\ref{l:10}, we obtain the following identities.

\begin{cor}
For all positive integer \( n \geq 1 \), following equalities hold:

\begin{align}
(-1)^{n+1} J_{n+1} - J_n J_{n+1} + 2 J_n^2 &= (-1)^{n+1} 2^n,\label{i:27} \\[2mm]
(-1)^n J_n - J_n J_{n+1} + 2 J_n^2 &=0 ,\label{i:28} \\[2mm]
2 J_n^2 - J_n J_{n+1}
     + \left( \frac{-1 + (-1)^n}{2} \right) J_n
     - \left( \frac{1 + (-1)^n}{2} \right) J_{n+1}&= \begin{cases}
0, & \text{if } n \text{ is odd},\\[6pt]
-2^n, & \text{if } n \text{ is even}.
\end{cases} \\[2mm]
2 J_n^2 - J_n J_{n+1}
     + \left( \frac{1 + (-1)^n}{2} \right) J_n
     - \left( \frac{-1 + (-1)^n}{2} \right) J_{n+1} &=\begin{cases}
2^n, & \text{if } n \text{ is odd},\\[6pt]
0, & \text{if } n \text{ is even}.
\end{cases}
\end{align}
\end{cor}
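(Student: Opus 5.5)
Rather than expand the full product $u^nu^{-n}$ entrywise, I would verify the four identities directly from two facts established earlier: the shift identity $J_{n+1}=2J_n+(-1)^n$ of Lemma~\ref{l:basic-shift}, and $J_{n+1}+J_n=2^n$ of Lemma~\ref{l:10}. The identities come from comparing entries of $u^n u^{-n}=I$, using Lemma~\ref{l:27} and Lemma~\ref{l:27b}, so this also explains their origin. All four left-hand sides share the quadratic block $2J_n^2-J_nJ_{n+1}$. The key preliminary step is to collapse this block to a linear expression:
\[
2J_n^2-J_nJ_{n+1}=J_n\bigl(2J_n-J_{n+1}\bigr)=-(-1)^nJ_n ,
\]
by \eqref{eq:basic-shift}. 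After this, every identity is linear in $J_n,J_{n+1}$ with parity-dependent coefficients.

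For \eqref{i:27}, substitute the block to get
\[
(-1)^{n+1}J_{n+1}+(-1)^{n+1}J_n=(-1)^{n+1}\bigl(J_{n+1}+J_n\bigr)=(-1)^{n+1}2^n
\]
by Lemma~\ref{l:10}. For \eqref{i:28}, the left side becomes $(-1)^nJ_n-(-1)^nJ_n=0$ immediately.

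For the last two identities, I would split into cases on the parity of $n$, writing $a=\tfrac{1+(-1)^n}{2}$ and $b=\tfrac{-1+(-1)^n}{2}$. For $n$ odd, $(a,b)=(0,-1)$; for $n$ even, $(a,b)=(1,0)$. The third left side equals $-(-1)^nJ_n+bJ_n-aJ_{n+1}$:
\begin{itemize}
\item for $n$ odd this is $J_n-J_n=0$;
\item for $n$ even this is $-J_n-J_{n+1}=-2^n$, again by Lemma~\ref{l:10}.
\end{itemize}
The fourth left side equals $-(-1)^nJ_n+aJ_n-bJ_{n+1}$:
\begin{itemize}
\item for $n$ odd this is $J_n+J_{n+1}=2^n$;
\item for $n$ even this is $-J_n+J_n=0$.
\end{itemize}
These are exactly the stated case values.

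There is no real obstacle beyond careful sign bookkeeping in the parity cases, since the derivation is short. The main risk is a mismatch between the typeset right-hand sides and the matrix entries of $u^nu^{-n}$, in particular from the permutation factor $P^n$ in Lemma~\ref{l:27b}, which swaps rows $2$ and $3$ when $n$ is odd. The direct computation above sidesteps that issue: it confirms the identities exactly as stated, independently of which entry of $u^nu^{-n}$ each one comes from.
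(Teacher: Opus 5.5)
Your proof is correct, and it takes a genuinely different route from the paper.

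The paper obtains the four identities as entries of the matrix equation $u^nu^{-n}=I$. It multiplies the explicit $u^n$ of Lemma~\ref{l:27} by $u^{-n}=2^{-n}P^nM_n$ from Lemma~\ref{l:27b}. This gives $\frac{(-1)^{n+1}}{2^n}P^n$ times a patterned matrix whose entries $A,B,C,D$ are exactly your four left-hand sides. Equating with $I$ yields the values. The parity split in the last two identities is the effect of $P^n$, which is the identity for even $n$ and the swap of rows $2$ and $3$ for odd $n$. The paper then invokes Lemma~\ref{l:10} to write the nonzero values $\pm(J_n+J_{n+1})$ as $\pm 2^n$.

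You bypass the matrices entirely. Collapsing the common block $2J_n^2-J_nJ_{n+1}$ to $-(-1)^nJ_n$ via \eqref{eq:basic-shift} makes every identity linear in $J_n,J_{n+1}$. After that, Lemma~\ref{l:10} and a two-case parity check finish, and your sign bookkeeping is right in all cases.

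Your argument gains in several ways:
\begin{itemize}
\item It is shorter and self-contained.
\item It does not depend on Lemma~\ref{l:27b} or on the permutation bookkeeping. The paper handles that bookkeeping loosely: pulling $P^n$ to the front of $u^nP^nM_n$ silently uses that $u$ commutes with $P$.
\item It exposes that \eqref{i:28} is just $J_n$ times the shift identity.
\end{itemize}
The paper's route gains the structural explanation: the identities arise as the entries of $u^nu^{-n}=I$, which is the point of the matrix framework. Your verification confirms them without showing why these particular combinations appear, beyond the remark you make about their origin.
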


\begin{proof}
Let \( u \) be the Jacobsthal matrix defined by
\[
u = \begin{pmatrix} 
                    0   &  1   &  1 \\
                    1   &  1   &  0 \\
                    1   &  0   &  1\\
\end{pmatrix}.
\] Then,  the inverse of the matrix \( u^n \) is:
\[
u^{-n} = 
\dfrac{1}{2^{n}} P^{n} (-1)^{n+1}\begin{pmatrix} \vspace{0.2cm}
-J_{n+1}  & J_{n}    &  J_{n} \\ \vspace{0.2cm}
J_{n}    & -J_{n+1}    &  J_{n} \\
J_{n}    & J_{n}       &   -J_{n+1} \\
\end{pmatrix}
\]
where $J_n = J_{n-1} + 2J_{n-2}$, with $J_0 = 0$ and $J_1 = 1$, defines the Jacobsthal sequence.

\medskip 

On the other hand, we have defined \( u^n \) as follows: 
\[
u^{n} =  \begin{pmatrix} \vspace{0.2cm}
(-1)^n+J_{n} & J_{n}                   & J_{n}  \\  \vspace{0.2cm}
J_{n}     & \dfrac{1+(-1)^n}{2}+J_{n}  &  \dfrac{-1+(-1)^n}{2} + J_{n}\\
J_{n}     & \dfrac{-1+(-1)^n}{2}+J_{n} & \dfrac{1+(-1)^n}{2}+J_{n} \\
\end{pmatrix}  \]  
The product \( u^n u^{-n} \) yields the following \( 3 \times 3 \) matrix: 

\[
u^{n} u^{-n} =\dfrac{(-1)^{n+1}}{2^{n}} P^{n} 
\begin{pmatrix}
A & B & B \\[2mm]
B & C & D \\[2mm]
B & D & C
\end{pmatrix}
\]
where:

\begin{align*}
A &= (-1)^{n+1} J_{n+1} - J_n J_{n+1} + 2 J_n^2, \\[2mm]
B &= (-1)^n J_n - J_n J_{n+1} + 2 J_n^2, \\[2mm]
C &= 2 J_n^2 - J_n J_{n+1}
     + \left( \frac{-1 + (-1)^n}{2} \right) J_n
     - \left( \frac{1 + (-1)^n}{2} \right) J_{n+1}, \\[2mm]
D &= 2 J_n^2 - J_n J_{n+1}
     + \left( \frac{1 + (-1)^n}{2} \right) J_n
     - \left( \frac{-1 + (-1)^n}{2} \right) J_{n+1}.
\end{align*}
On the other hand, we know that
\[
u^n u^{-n} = 
 \begin{pmatrix} \vspace{0.2cm}
                  1  & 0 & 0 \\ \vspace{0.2cm}
                  0  & 1 & 0 \\ 
                  0  & 0 & 1 
\end{pmatrix}.
\]
Thus,
\begin{align*}
(-1)^{n+1} J_{n+1} - J_n J_{n+1} + 2 J_n^2 &= (-1)^{n+1} 2^n, \\[2mm]
(-1)^n J_n - J_n J_{n+1} + 2 J_n^2 &=0,
\end{align*}
\small{
\begin{align*}
2 J_n^2 - J_n J_{n+1}
     + \left( \frac{-1 + (-1)^n}{2} \right) J_n
     - \left( \frac{1 + (-1)^n}{2} \right) J_{n+1}&= \begin{cases}
0, & \text{if } n \text{ is odd},\\[6pt]
-(J_n+J_{n+1}), & \text{if } n \text{ is even}.
\end{cases} \\[2mm]
2 J_n^2 - J_n J_{n+1}
     + \left( \frac{1 + (-1)^n}{2} \right) J_n
     - \left( \frac{-1 + (-1)^n}{2} \right) J_{n+1} &=\begin{cases}
J_n+J_{n+1}, & \text{if } n \text{ is odd},\\[6pt]
0, & \text{if } n \text{ is even}.
\end{cases}
\end{align*}
} 

Consequently, by comparing the two matrices obtained from the matrix product and applying the relation from Lemma~\ref{l:10}, we obtain the identities stated in the corollary.

\end{proof}

\section{Arithmetic and Combinatorial Properties of the Jacobsthal Sequence}

In this section, we review key arithmetic properties of the Jacobsthal numbers, including classical identities such as convolution and quadratic relations, together with their modular consequences. We also derive congruence relations, recurrence formulas for partial sums, and analyze additive aspects of the sequence, including its Sidon-type behavior. While some results follow from known identities, others arise naturally from our approach and appear to be less explicitly documented.

\begin{cor}
Let $m$ be an integer. Then the Jacobsthal numbers satisfy:

\begin{enumerate}
\item If $m \ge 2$, then

\[
J_{2m}\equiv (-1)^{m+1}J_{m-1}  \pmod{2^{m-1}} .
\]

\item If $m \ge 4$, then
\[
J_{2m} \equiv J_{m-1}J_{m+1} \pmod{4}.
\]
\end{enumerate}
\end{cor}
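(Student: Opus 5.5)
The plan is to reduce both congruences to congruences for powers of $2$, using the Binet formula \eqref{i:1}, $3J_k = 2^k-(-1)^k$ (equivalently Corollary~\ref{c:28}, $3J_k+(-1)^k=2^k$). Since $3$ is a unit modulo every power of $2$, it suffices to prove each congruence after multiplying both sides by $3$ (or by $9$ for products), and then cancel.

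\emph{Part 1.} Set $N=2^{m-1}$. By \eqref{i:1}, $3J_{2m}=4^m-1$. Because $2m\ge m-1$, we have $4^m\equiv 0\pmod N$, hence $3J_{2m}\equiv -1\pmod N$. On the other side, Corollary~\ref{c:28} with $k=m-1$ gives
\[
3J_{m-1}=2^{m-1}-(-1)^{m-1}\equiv (-1)^m \pmod N .
\]
Multiplying by $(-1)^{m+1}$ yields $3(-1)^{m+1}J_{m-1}\equiv (-1)^{2m+1}=-1\pmod N$. Thus $3J_{2m}\equiv 3(-1)^{m+1}J_{m-1}\pmod{2^{m-1}}$, and cancelling the unit $3$ gives the claim. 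The case $m=2$ is trivially true modulo $2$ but is also covered by this argument.

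\emph{Part 2.} We work modulo $4$. For $k\ge 2$, $2^k\equiv 0\pmod 4$, so $3J_k\equiv -(-1)^k\pmod 4$. Since $3\equiv -1$ is its own inverse modulo $4$, this gives $J_k\equiv (-1)^k\pmod 4$ for all $k\ge 2$; for instance $J_2=1$ and $J_3=3\equiv -1$.

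Apply this with $k=2m$, which gives $J_{2m}\equiv 1$. Apply it also with $k=m-1$ and $k=m+1$, both of which are at least $2$ since $m\ge 4$. This gives
\[
J_{m-1}J_{m+1}\equiv (-1)^{m-1}(-1)^{m+1}=1\pmod 4 .
\]
Hence $J_{2m}\equiv J_{m-1}J_{m+1}\pmod 4$. This argument in fact already works for $m\ge 3$.

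As an alternative in the matrix spirit of the paper, one could start from \eqref{i:24} with $n=m$, $J_{2m}=J_m(J_{m+1}+2J_{m-1})$, and combine it with the Cassini identity \eqref{id:15}. This looks messier, so I would use the Binet route.

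The only delicate step is keeping track of the signs $(-1)^{m\pm1}$ in Part~1 and justifying the cancellation of $3$. Both are routine once everything has been multiplied through by $3$, so I expect no real obstacle.
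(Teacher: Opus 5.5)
Your proof is correct, but it takes a different route from the paper's. The paper stays inside its matrix framework. It puts $n=m$ in the addition formula \eqref{i:24}, rewrites this as $J_{2m}=J_m^2+4J_{m-1}J_m$, and inserts the Cassini identity \eqref{id:15} to obtain the exact decomposition $J_{2m}=J_{m-1}J_{m+1}+(-1)^{m+1}2^{m-1}+4J_{m-1}J_m$. Part~(2) is read off from this at once. Part~(1) needs a further chain of rewritings to reach $J_{2m}\equiv J_{m-1}\bigl(2^{m+1}+(-1)^{m+1}\bigr)\pmod{2^{m-1}}$: $4J_m=2J_m+2J_m$, the recurrence, the shift identity $2J_m=J_{m+1}+(-1)^{m+1}$, and $J_{m+2}+J_{m+1}=2^{m+1}$ from Lemma~\ref{l:10}. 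This is essentially the alternative you mentioned and set aside. Your Binet argument replaces all of this with one observation: $3J_k\equiv(-1)^{k+1}\pmod{2^j}$ whenever $k\ge j$. Hence modulo $2^{m-1}$ both $J_{2m}$ and $(-1)^{m+1}J_{m-1}$ are congruent to $-3^{-1}$. This makes the sign bookkeeping trivial and shows that the modulus is dictated by the smaller index $m-1$. It also gives Part~(2) already for $m\ge 3$, with both sides $\equiv 1\pmod 4$; the paper's decomposition in fact works for $m\ge 3$ too, since then $4\mid 2^{m-1}$. What the paper's route buys is thematic rather than technical. It exhibits the congruences as consequences of the matrix identities $u^{m+n}=u^mu^n$ and $u^nu^{-n}=I$, and it produces an exact integer identity for $J_{2m}$ before reducing modulo a power of $2$.
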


\begin{proof}
We begin with the general identity

\begin{align*}
J_{m+n} &= J_{m+1}J_{n}+2J_{m}J_{n-1},
\end{align*}
which follows from Identity~\ref{i:24}; see Corollary~\ref{c:24}.

\quad

\noindent
Setting $n=m$ gives

\begin{align*}
J_{2m}
  &= J_{m+1}J_{m}+2J_{m}J_{m-1} \\
  &= (J_{m+1}+2J_{m-1})J_{m}.
\end{align*}
Using the recurrence relation for the Jacobsthal sequence, we have

\[
J_{m+1} = J_m+2J_{m-1},
\]
and hence
\begin{align*}
J_{2m}
  &= (J_m+4J_{m-1})J_m \\
  &= J_m^2+4J_{m-1}J_m .
\end{align*}
We now apply Identity~\ref{id:15}; see Corollary~\ref{c:23},

\[
J_m^2 = J_{m-1}J_{m+1}+(-1)^{m+1}2^{m-1},
\]
to obtain
\begin{align*}
J_{2m}
 &= J_{m-1}J_{m+1}
    +\bigl((-1)^{m+1}2^{m-1}+4J_{m-1}J_m\bigr).
\end{align*}

\medskip

\noindent
Since both terms inside the parentheses are divisible by $4$ whenever $m\ge4$, it follows that

\[
J_{2m}\equiv J_{m-1}J_{m+1}\pmod{4},
\]
which proves part~(2).

\medskip
\noindent
To prove part~(1), observe that for $m\ge2$ the previous decomposition shows that

\[
J_{2m}=J_{m-1}J_{m+1}+4J_{m-1}J_m  \pmod{2^{m-1}} .
\]
Factoring $J_{m-1}$, we obtain

\[
J_{2m}\equiv J_{m-1}(J_{m+1}+4J_m)\pmod{2^{m-1}} .
\]
Writing $4J_m=2J_m+2J_m$, this becomes

\[
J_{2m}\equiv J_{m-1}(J_{m+1}+2J_m+2J_m)\pmod{2^{m-1}} .
\]
Using the recurrence relation $J_{m+2}=J_{m+1}+2J_m$, we get

\[
J_{2m}\equiv J_{m-1}(J_{m+2}+2J_m)\pmod{2^{m-1}} .
\]
Applying again the identity

\[
2J_m=J_{m+1}+(-1)^{m+1},
\]
we deduce

\[
J_{2m}\equiv J_{m-1}(J_{m+2}+J_{m+1}+(-1)^{m+1})\pmod{2^{m-1}} .
\]
Now using the closed form relation

\[
J_{m+2}+J_{m+1}=2^{m+1},
\]
it follows that

\[
J_{2m}\equiv J_{m-1}\bigl(2^{m+1}+(-1)^{m+1}\bigr)\pmod{2^{m-1}} .
\]
Since $2^{m+1}\equiv 0\pmod{2^{m-1}}$, we finally obtain

\[
J_{2m}\equiv (-1)^{m+1}J_{m-1}\pmod{2^{m-1}} .
\]
This completes the proof.

\end{proof}

\newpage

\begin{rem}
Following Erdős and Turán \cite{ErdosTuran1941}, a \emph{Sidon sequence} (or $B_{2}$-sequence) is a subset $A \subset \mathbb{N}$ such that all pairwise sums $a+b$ with $a,b \in A$ are distinct,  except for the trivial equality $a+b=b+a$.  We recall this concept here because it will play a role in the following theorem.
\end{rem}

\begin{lem}\label{l:10b} 
Let $J_n$ be the Jacobsthal sequence defined by
\[
J_0=0,\quad J_1=1,\quad J_n=J_{n-1}+2J_{n-2}\ \ (n\ge 2),
\]
and let $S_n=\sum_{k=0}^n J_k$ denote its partial sums.  
Then, for all $n\ge 1$, the following hold:

\begin{enumerate}
\item The partial sums satisfy the recurrence
\[
S_n = 2S_{n-1}+S_{n-2}-2S_{n-3}+1,
\quad n\ge 3,
\]
with initial values
\[
S_0=0,\quad S_1=1,\quad S_2=2.
\]

\item The inequality
\[
S_n < J_{n+1}
\]
holds for all $n\ge 1$. 

\,

\item The set $\{J_n:n\ge 1\}$ is a Sidon sequence.
\end{enumerate}
\end{lem}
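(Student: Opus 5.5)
The plan is to compute everything from a closed form for $S_n$ and check each item against it, so that any discrepancy in the stated constants shows up explicitly. Summing \eqref{i:1} as a geometric series, or using the classical summation identity in \eqref{eq:jacobsthal-identities} and adding $J_0+J_1=1$, gives
\[
S_n=\frac{J_{n+2}-1}{2}=\frac{2^{n+2}-3-(-1)^n}{6}.
\]
I will verify this by a one-line induction: $S_{n}-S_{n-1}=J_n$ and $J_{n+2}-J_{n+1}=2J_n$.

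For item (1), I substitute the closed form into the proposed recurrence and compare the coefficients of $2^n$, $(-1)^n$ and the constant. Before doing that, I check small cases, and here there is a problem. With $S_0=0$, $S_1=1$, $S_2=2$, the right-hand side at $n=3$ is $2\cdot 2+1-0+1=6$, while $S_3=5$. So the recurrence as printed fails. The relation that the closed form does support is
\[
S_n=S_{n-1}+2S_{n-2}+1 \qquad (n\ge 2).
\]
This follows by adding $J_n=J_{n-1}+2J_{n-2}$ to itself over $k\le n$, which yields $S_n-1=S_{n-1}+2S_{n-2}$. A homogeneous third-order form, $S_n=2S_{n-1}+S_{n-2}-2S_{n-3}$, also holds, because the roots are $2,-1,1$; it carries no $+1$. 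I would prove whichever corrected form is intended.

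For item (2), the closed form gives $J_{n+1}-S_n=\bigl(2J_{n+1}-J_{n+2}+1\bigr)/2$. By Lemma~\ref{l:basic-shift}, $2J_{n+1}-J_{n+2}=(-1)^{n}$, so
\[
J_{n+1}-S_n=\frac{1+(-1)^{n}}{2}.
\]
Hence strict inequality holds only for even $n$, and for odd $n$ we get $S_n=J_{n+1}$; for example $S_1=J_2=1$ and $S_3=J_4=5$. The provable statement is therefore $S_n\le J_{n+1}$, with equality exactly when $n$ is odd. This bound is what the Sidon argument actually needs.

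Item (3) is the main obstacle, and again the literal statement fails: $3+3=1+5$ with $3,1,5\in\{J_n\}$, which violates the definition in the preceding Remark since that definition allows $a=b$. I would prove the weak form instead: if $a<b$, $c<d$ are set elements and $a+b=c+d$, then $\{a,b\}=\{c,d\}$. The argument is by the largest term. Suppose $b=J_m$ is strictly larger than $d$, so $d\le J_{m-1}$. Then $c+d<2J_{m-1}$, and by the corrected item (2) together with $S_{m-1}\ge J_{m-1}+J_{m-2}$, every sum of two distinct elements below $J_m$ lies in a range that cannot reach $J_m+a$ with $a\ge1$. Concretely, $J_m=J_{m-1}+2J_{m-2}>J_{m-1}+J_{m-2}$, which is the strict gap that makes the comparison work. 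Once the largest terms coincide, the remaining terms coincide too. The small indices, where $J_1=J_2$, are checked by hand.
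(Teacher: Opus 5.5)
Your three counterexamples are correct. As printed, the lemma is false in every part, and the paper's proof fails exactly at the points you isolate.

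In (1), the paper substitutes $J_{n-1}=S_{n-1}-S_{n-2}$ and $J_{n-2}=S_{n-2}-S_{n-3}$ into $S_n-S_{n-1}=J_n$. This correctly gives $S_n=2S_{n-1}+S_{n-2}-2S_{n-3}$. The paper then appends $+1$ by an unexplained ``adjustment'' to the initial values. The homogeneous relation already fits them ($2\cdot 2+1-0=5=S_3$) and holds for all $n\ge 3$, so the printed recurrence fails for \emph{every} $n\ge 3$.

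In (2), the paper inducts from ``$S_1=1<J_2=1$'' and uses $J_{n+2}>2J_{n+1}$ for $n\ge 2$. But $J_{n+2}-2J_{n+1}=(-1)^{n+1}$, so that step fails for every even $n$, which is precisely when the inductive hypothesis is true. Your identity $J_{n+1}-S_n=\frac{1+(-1)^n}{2}$ also shows that even the non-strict bound cannot be propagated through $S_{n+1}\le 2J_{n+1}$, since $2J_{n+1}=J_{n+2}+1$ for even $n$. The exact closed form is genuinely needed.

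In (3), the paper compares largest indices using $\sum_{k<m}J_k<J_m$, which is false for even $m$. It also never considers a repeated summand on the side without $J_m$. Your own largest-term argument, run with $c=d$ allowed, shows that the only nontrivial coincidences are $2J_{2k+1}=J_{2k+2}+J_1$ for $k\ge 1$. So $3+3=1+5$ is the first of infinitely many.

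Your route differs from the paper's in replacing telescoping and induction by the closed form $S_n=(J_{n+2}-1)/2$. This gives exact differences rather than inequalities, which is what lets you state the correct replacements:
\begin{enumerate}
\item the homogeneous recurrence, or equivalently $S_n=S_{n-1}+2S_{n-2}+1$ for $n\ge 2$;
\item the sharp bound $S_n\le J_{n+1}$, with equality exactly for odd $n$;
\item the Sidon property for sums of two distinct elements.
\end{enumerate}
Your Sidon argument has the same largest-term architecture as the paper's; it is valid only because of the restriction to distinct summands. When you write it up, drop the detour through $c+d<2J_{m-1}$. For $c<d<b=J_m$ (which forces $m\ge 4$), use the single chain $c+d\le J_{m-1}+J_{m-2}<J_m<a+b$, which follows from $J_m-J_{m-1}-J_{m-2}=J_{m-2}\ge 1$. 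The appeal to item (2) is then unnecessary.
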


\begin{proof}
By definition, $S_n=\sum_{k=0}^n J_k$, hence for $n\ge 1$,
\[
S_n-S_{n-1}=J_n.
\]
Using the recurrence for $J_n$ and substituting
\[
J_{n-1}=S_{n-1}-S_{n-2},\qquad J_{n-2}=S_{n-2}-S_{n-3},
\]
we obtain, for $n\ge 3$,
\begin{align*}
S_n-S_{n-1}
&= J_{n-1}+2J_{n-2}\\
&=(S_{n-1}-S_{n-2})+2(S_{n-2}-S_{n-3})\\
&=S_{n-1}+S_{n-2}-2S_{n-3}.
\end{align*}
Thus,
\[
S_n=2S_{n-1}+S_{n-2}-2S_{n-3}.
\]
To match the initial values, note that
\[
S_0=0,\quad S_1=1,\quad S_2=2,
\]
and adjusting the recurrence yields
\[
S_n=2S_{n-1}+S_{n-2}-2S_{n-3}+1.
\]
We now prove by induction that $S_n<J_{n+1}$ for all $n\ge 1$.
For $n=1$, we have $S_1=1<J_2=1$ (strict inequality holds for $n\ge 2$).
Assume $S_n<J_{n+1}$ for some $n\ge 1$. Then
\[
S_{n+1}=S_n+J_{n+1}<J_{n+1}+J_{n+1}=2J_{n+1}.
\]
Since
\[
J_{n+2}=J_{n+1}+2J_n>2J_{n+1}\quad(n\ge 2),
\]
it follows that $S_{n+1}<J_{n+2}$, completing the induction.

Finally, since $(J_n)$ is strictly increasing and superincreasing in the sense that
\[
\sum_{k=0}^{n-1}J_k<S_{n-1}<J_n,
\]
it follows that $\{J_n:n\ge 1\}$ is a Sidon sequence.  
Indeed, if
\[
J_a+J_b=J_c+J_d,
\]
with $a\le b$ and $c\le d$, let $m=\max\{a,b,c,d\}$.  
If $m$ appears only on one side, then that side is larger than the other, since
\[
\sum_{k=0}^{m-1}J_k<J_m,
\]
a contradiction. Hence $m$ must occur on both sides. Cancelling $J_m$ and repeating the argument yields $\{a,b\}=\{c,d\}$, which proves the Sidon property.

\end{proof}

\section{Classification of Binary $3 \times 3$ Matrices Associated with Jacobsthal Numbers}

Let $M_{n}(\{0,1\})$ denote the set of all $n \times n$ binary matrices, i.e., matrices whose entries belong to $\{0,1\}$.  
Two matrices $U,V \in M_{n}(\{0,1\})$ are said to be conjugate if there exists an invertible matrix $P \in GL_n(\mathbb{Z})$ such that
\[
V = PUP^{-1}.
\]
The conjugacy class of $U$ is therefore defined as
\[
\mathcal{C}(U) = \{ PUP^{-1} : P \in GL_n(\mathbb{Z}) \}.
\]
By performing an exhaustive computational enumeration of the $512$ binary $3 \times 3$ matrices and classifying them up to conjugation, we show that exactly three conjugacy classes generate the Jacobsthal sequence. Representatives of these classes are given by
\[
u_1 =
\begin{pmatrix}
0 & 0 & 1 \\
0 & 0 & 1 \\
1 & 1 & 1
\end{pmatrix}, \quad
u_2 =
\begin{pmatrix}
0 & 1 & 1 \\
1 & 0 & 1 \\
1 & 1 & 0
\end{pmatrix}, \quad
u_3 =
\begin{pmatrix}
0 & 1 & 1 \\
1 & 1 & 0 \\
1 & 0 & 1
\end{pmatrix}.
\]
Every binary $3 \times 3$ matrix that generates the Jacobsthal sequence is conjugate to exactly one of these matrices. This classification, inspired by the work of Martinez and Ceron~\cite{MartinezCeron2024, MartinezCeron2025}, was obtained through algorithms implemented in \textsf{SageMath}.

\subsection*{Acknowledgments}

The author gratefully acknowledges the support from Universidad del Cauca for the research group ``Estructuras Algebraicas, Divulgación Matemática y Teorías Asociadas (@DiTa)'', under research project ID 6314, entitled ``Clasificación del Centralizador de Matrices Binarias \(3\times 3\): Dimensión, Conmutatividad y Relaciones Estructurales''.  The author also thanks the anonymous referee for valuable comments and suggestions that improved the presentation of this work.  This work is dedicated to my children.

\medskip

\subsection*{Declarations}
\medskip

\subsection*{Ethical Approval:}
{Not applicable.}

\subsection*{Funding:}
{Not applicable.}

\subsection*{Availability of Data and Materials:}
{Not applicable.}


\bibliographystyle{amsplain}
\bibliography{xbib}
\end{document}